\documentclass[12pt]{article}
\usepackage[utf8]{inputenc}
\usepackage[T1]{fontenc}
\usepackage{amsmath,amssymb,amsthm}
\usepackage{geometry}
\usepackage{hyperref}
\usepackage{indentfirst}
\usepackage{booktabs}
\usepackage{xcolor}
\usepackage{fvextra}
\geometry{margin=1in}

\DefineVerbatimEnvironment{code}{Verbatim}{
  breaklines=true,
  breakanywhere=true,
  fontsize=\small,
  xleftmargin=1em,
}
\DefineVerbatimEnvironment{term}{Verbatim}{
  breaklines=true,
  breakanywhere=true,
  fontsize=\small,
  xleftmargin=1em,
}

\newtheorem{theorem}{Theorem}
\newtheorem{proposition}{Proposition}
\newtheorem{lemma}{Lemma}
\newtheorem{corollary}{Corollary}
\theoremstyle{definition}
\newtheorem{definition}{Definition}
\theoremstyle{remark}
\newtheorem{remark}{Remark}

\title{Genus-One Fibrations and the Jacobian of Linear Slices
in the Quintic Equal-Sum Problem}
\author{Valery Asiryan\\[3pt]
\small \texttt{asiryanvalery@gmail.com}}
\date{\small March 15, 2026}

\begin{document}

\maketitle

\begin{abstract}
We study the Diophantine equation
\[
a^5+b^5=c^5+d^5,\qquad a,b,c,d\in\mathbb{Z}_{\ge0},
\]
under the linear slicing constraint
\[
(c+d)-(a+b)=h.
\]
First, we give a self-contained proof that any solution must satisfy the necessary congruence
$30\mid h$; this is the $k=5$ instance of the \emph{modular divisibility obstruction} (MDO).

Second, we symmetrize the problem by passing to sums and differences
$S=a+b$, $u=b-a$, $T=c+d=S+h$, $v=d-c$ and reduce the quintic equality to a
biquadratic equation in $v$. Writing $Z=v^2$, we obtain an explicit discriminant
$D_Z(S,u)$, which must be a perfect square for solvability, and we
give an exact (and computable) criterion for the existence of an \emph{integer}
$v$ \emph{of the required parity} $v\equiv T\pmod{2}$ (equivalently, for the existence of
integers $c,d$ with sum $T$) in terms of $D_Z$ and integrality/square conditions on $Z$.
We also record the additional size constraints on $u$ and $v$ needed to recover
solutions with $a,b,c,d\in\mathbb{Z}_{\ge0}$.

Third, we provide the appropriate geometric interpretation: for nonzero slices $h\neq 0$ the equation
$Y^2=D_Z(S,u)$ defines a family of genus-one curves over $\mathbb{Q}(S)$, which need
not admit a rational section; thus one must pass to the Jacobian fibration to speak
about Mordell--Weil rank. On the Jacobian side we isolate a uniform arithmetic obstruction:
for every nonzero slice parameter $h$ the Jacobian $E_h/\mathbb{Q}(S)$ carries a global
rational $2$-torsion section and the square class of the quadratic discriminant governing full rational $2$-torsion is determined by an
explicit quintic factor
\[
Q_5(S,h)=S^5+4hS^4+8h^2S^3+8h^3S^2+4h^4S+\frac45 h^5.
\]
Over the function field $\mathbb{Q}(S)$, full rational $2$-torsion would force $S\cdot Q_5(S,h)$ to be a square in $\mathbb{Q}(S)$; this never occurs, because $S\cdot Q_5(S,h)$ has odd valuation at $S=0$ whenever $h\neq 0$.
For rational specializations $S=S_0\in\mathbb{Q}^\times$, the square condition that $S_0\cdot Q_5(S_0,h)$ be a square in $\mathbb{Q}$ reduces by homogeneity to rational points on a universal genus-two
hyperelliptic curve independent of $h$; using a verified \textsc{Magma} computation
(rank bound $0$) we show that this curve has only the affine rational point
$(0,0)$ and the two points at infinity, hence no nonsingular rational specialization acquires additional rational $2$-torsion.

Finally, for the first admissible integer slice $h=30$ we compute the classical invariants $(I,J)$ of the associated
binary quartic and write down the Jacobian elliptic curve $E_{30}/\mathbb{Q}(S)$ explicitly.
We then exploit the presence of a global rational $2$-torsion point and apply the
injectivity criterion of Gusi\'c--Tadi\'c for the specialization homomorphism to obtain
a computationally verified upper bound
\[
\mathrm{rank}\,E_{30}(\mathbb{Q}(S))\le 1.
\]
To prove the matching lower bound, we pass to the universal Jacobian model, shift by the rational $2$-torsion point, and exhibit an explicit rational section in closed form.
Specializing at $S=12$ on the slice $h=30$, where the specialization homomorphism is injective, yields a rational point on the specialized curve with nonzero $Y$-coordinate; since the torsion subgroup of this specialized curve is $\mathbb{Z}/2\mathbb{Z}$, the point has infinite order.
Hence
\[
\mathrm{rank}\,E_h(\mathbb{Q}(S))=1
\qquad\text{for every }h\in\mathbb{Q}^\times.
\]
A simple homogeneity normalization in the slice parameter (setting $x=S/h$ and scaling the Weierstrass coordinates) shows that this exact generic rank statement holds uniformly for all $h\neq 0$.
Thus the Jacobian fibration on each fixed nonzero slice has Mordell--Weil rank exactly one, and any corresponding family of \emph{integral} solutions on admissible integer slices would additionally have to satisfy the integrality, parity, and size constraints arising in the reduction.

\medskip
\noindent\textbf{Keywords:} Diophantine equations, equal sums of like powers, quintic equation, linear slicing, genus-one fibration, Jacobian of binary quartics, hyperelliptic curves, Chabauty--Coleman method, specialization, computational number theory.

\smallskip
\noindent\textbf{MSC (2020):} 11D41 (Primary); 11G05; 11Y50; 11A07; 14H52.

\end{abstract}

\section{Introduction}

The existence of nontrivial equalities of the form
\begin{equation}\label{eq:main}
a^5+b^5=c^5+d^5,\qquad a,b,c,d\in\mathbb{Z}_{\ge0},
\end{equation}
with $\{a,b\}\ne\{c,d\}$ is unknown (see, e.g.,~\cite{LanderParkinSelfridge1967,Browning2002,Guy2004}).
For quartic Diophantine equations, geometric methods can be highly effective; for instance,
Elkies~\cite{Elkies1988} produced solutions to the related equation $A^4+B^4+C^4=D^4$ by exploiting
an elliptic fibration on a diagonal quartic K3 surface. In contrast, the quintic surface underlying
\eqref{eq:main} (which is of general type) remains much less understood from the standpoint of explicit arithmetic
constructions and obstructions.

One systematic way to organize a search is to impose a linear constraint on the sums:
\begin{equation}\label{eq:slice}
(c+d)-(a+b)=h,
\end{equation}
where $h\in\mathbb{Z}$ is fixed. We refer to \eqref{eq:slice} as a \emph{linear slice}.

Throughout, we set
\[
S:=a+b,\qquad T:=c+d=S+h.
\]
We also use the differences
\[
u:=b-a,\qquad v:=d-c.
\]
Then
\[
a=\frac{S-u}{2},\quad b=\frac{S+u}{2},\qquad
c=\frac{T-v}{2},\quad d=\frac{T+v}{2},
\]
so integrality requires $u\equiv S\pmod{2}$ and $v\equiv T\pmod{2}$.

\medskip
\noindent
\textbf{Nonnegativity constraints.}
In the original problem \eqref{eq:main} we require $a,b,c,d\in\mathbb{Z}_{\ge0}$.
In terms of $(S,u)$ this is equivalent to the size condition
\[
|u|\le S \qquad (\text{together with } u\equiv S \!\!\pmod 2),
\]
and similarly, since $c,d\ge0$ forces $T\ge0$, we have
\[
|v|\le T \qquad (\text{together with } v\equiv T \!\!\pmod 2).
\]
Because \eqref{eq:main} is symmetric under exchanging $a$ and $b$ (and also $c$ and $d$),
one may, when searching for solutions, impose the normalization $0\le u\le S$ and $0\le v\le T$
without loss of generality (keeping track of the induced identifications).

\medskip
\noindent
\textbf{Rational vs.\ Integral constraints.}
Several intermediate steps below (discriminant curves, genus-one fibrations over $\mathbb{Q}(S)$, and Jacobian elliptic curves over $\mathbb{Q}(S)$)
concern \emph{rational} points and sections over a function field. These geometric objects do not encode the \emph{inequality} constraints
$|u|\le S$ and $|v|\le T$ required to recover solutions of \eqref{eq:main} in $\mathbb{Z}_{\ge0}$.
Whenever we translate back from $(S,u,T,v)$ to $(a,b,c,d)\in\mathbb{Z}_{\ge0}$, we therefore impose the nonnegativity (size) constraints,
together with the integrality and parity conditions, explicitly.

\medskip
\noindent
\textbf{Universality vs.\ Integrality.}
The Jacobian fibration associated with a fixed nonzero slice parameter $h$ is defined over the function field $\mathbb{Q}(S)$.
At the level of Jacobian geometry (e.g.\ Mordell--Weil rank over $\mathbb{Q}(S)$), the parameter $h\neq 0$ can be normalized away by the change of variables $x=S/h$ and a scaling of Weierstrass coordinates; see Subsection~\ref{subsec:jacobian-universality}.
In contrast, when one seeks \emph{integral} solutions of \eqref{eq:main} on a fixed integer slice \eqref{eq:slice}, the congruence condition $30\mid h$ and the integrality/parity/size constraints in $(S,u,T,v)$ depend on the chosen integer $h$ and cannot be removed by such a normalization.

\section{A necessary modular obstruction: $30\mid h$}

\begin{proposition}\label{prop:30divides}
If \eqref{eq:main} holds for some $a,b,c,d\in\mathbb{Z}_{\ge0}$ and $h=(c+d)-(a+b)$, then
\[
30 \mid h.
\]
\end{proposition}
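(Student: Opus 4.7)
The plan is to reduce the claim $30\mid h$ to the three prime-power congruences $2\mid h$, $3\mid h$, and $5\mid h$, which are then combined via the Chinese Remainder Theorem. These three primes are not accidental: they are precisely the primes $p$ for which $(p-1)\mid (5-1)$, i.e.\ for which Fermat's little theorem upgrades to a universal congruence $n^5\equiv n\pmod{p}$. This is the structural reason why $30$ is the natural modulus, and it underlies the general ``modular divisibility obstruction'' referenced in the introduction.

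First I would establish the master identity $n^5\equiv n\pmod{p}$ for each of $p=2,3,5$. For $p=5$ this is Fermat's little theorem directly. For $p=2$ it is the trivial observation that $n$ and $n^5$ share parity. For $p=3$ one writes $n^5=n^3\cdot n^2$ and applies $n^3\equiv n\pmod 3$ (Fermat for $p=3$) to obtain $n^5\equiv n\cdot n^2=n^3\equiv n\pmod 3$. In each case the congruence holds for every $n\in\mathbb{Z}$, with no coprimality hypothesis needed.

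Next I would apply these three congruences to the hypothesis $a^5+b^5=c^5+d^5$. Reducing modulo $p\in\{2,3,5\}$ yields
\[
a+b\equiv a^5+b^5 = c^5+d^5\equiv c+d\pmod{p},
\]
hence $p\mid(c+d)-(a+b)=h$ for each such $p$. Since $2,3,5$ are pairwise coprime, CRT then gives $30\mid h$, completing the argument. I would close by remarking that nonnegativity of $a,b,c,d$ is not used anywhere, so the same obstruction holds for arbitrary integer solutions.

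I do not anticipate a substantive obstacle; the proof is a direct application of Fermat's little theorem and CRT, and the only thing worth stating carefully is the uniformity of $n^5\equiv n$ over all residues (including $n\equiv 0$), which is what allows the argument to be applied to the raw sums $a+b$ and $c+d$ without any invertibility assumption.
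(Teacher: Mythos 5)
Your proposal is correct and follows essentially the same route as the paper: establish $n^5\equiv n\pmod p$ for $p\in\{2,3,5\}$ (the paper phrases this uniformly via $5\equiv 1\pmod{p-1}$, you verify each prime separately), reduce the equation modulo each prime to get $p\mid h$, and combine. The only cosmetic difference is your case-by-case verification versus the paper's one-line appeal to $(p-1)\mid 4$; both are valid and the observation that no coprimality hypothesis is needed is correctly handled in each.
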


\begin{proof}
Let $p\in\{2,3,5\}$. Since $5\equiv 1 \pmod{p-1}$, Fermat's little theorem implies
$x^5\equiv x\pmod p$ for all integers $x$ (including multiples of $p$).
Reducing \eqref{eq:main} modulo $p$ gives
\[
a+b \equiv c+d \pmod p,
\]
hence $p\mid (c+d)-(a+b)=h$. Therefore $2\mid h$, $3\mid h$, and $5\mid h$, so
$30\mid h$.
\end{proof}

\begin{remark}
Thus only one residue class modulo $30$ is admissible for $h$, i.e.\ $29/30\approx 96.67\%$ of linear slices are ruled out by a simple congruence. In the language of prior work~\cite{AsiryanMDO} this is the $k=5$ case of the modular divisibility obstruction (MDO) $M_k\mid h$.
\end{remark}

In the remainder of the paper, when we discuss integral solutions on a fixed integer slice, we assume $30\mid h$. Before specializing to a particular slice, we isolate a uniform obstruction on the Jacobian side (Subsection~\ref{subsec:universal-obstruction}) showing that, for $h\neq 0$, the Jacobian cannot have full rational $2$-torsion.
We then discuss a normalization showing that the Jacobian geometry for any fixed $h\neq 0$ is universal up to scaling (Subsection~\ref{subsec:jacobian-universality}).
After this general discussion we concentrate on the first admissible positive slice:
\[
h=30,\qquad T=S+30.
\]

\section{Symmetrization and reduction to a discriminant condition}

\subsection{Symmetrization of the quintic sums}

\begin{lemma}\label{lem:sym}
Let $S=a+b$ and $u=b-a$ with $u\equiv S\pmod{2}$. Then
\begin{equation}\label{eq:L}
16(a^5+b^5)=S^5+10S^3u^2+5Su^4.
\end{equation}
Similarly, if $T=c+d$ and $v=d-c$ with $v\equiv T\pmod{2}$, then
\begin{equation}\label{eq:R}
16(c^5+d^5)=T^5+10T^3v^2+5Tv^4.
\end{equation}
\end{lemma}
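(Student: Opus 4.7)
The plan is to prove \eqref{eq:L} by a direct binomial expansion; then \eqref{eq:R} will follow at once from the identical calculation with $(S,u,a,b)$ replaced by $(T,v,c,d)$. The congruence $u\equiv S\pmod{2}$ is needed only so that $a$ and $b$ are actually integers; as an identity in $\mathbb{Q}[S,u]$, \eqref{eq:L} holds without any divisibility hypothesis.

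First I would use the inversion formulas $a=(S-u)/2$ and $b=(S+u)/2$ to write
\[
a^5 + b^5 \;=\; \frac{(S-u)^5 + (S+u)^5}{2^5}.
\]
Applying the binomial theorem, the terms with an odd exponent on $u$ cancel in pairs, leaving
\[
(S+u)^5 + (S-u)^5 \;=\; 2\Bigl(\binom{5}{0} S^5 + \binom{5}{2} S^3 u^2 + \binom{5}{4} S u^4\Bigr) \;=\; 2\bigl(S^5 + 10 S^3 u^2 + 5 S u^4\bigr),
\]
using $\binom{5}{0}=1,\ \binom{5}{2}=10,\ \binom{5}{4}=5$. Combining the two displays and multiplying through by $16$ gives \eqref{eq:L}.

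The second identity \eqref{eq:R} is obtained by applying exactly the same calculation to $(T,v)$ in place of $(S,u)$; since the derivation used only the algebraic relations $c+d=T$ and $d-c=v$, and no particular property of the ambient variables, this is a purely notational substitution.

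There is no substantive obstacle: the lemma is essentially a one-line binomial identity, and the only thing to check is the parity-cancellation of the odd-$u$ terms together with the three small binomial coefficients. If one preferred a more symmetric-function viewpoint, the same formula follows from Newton's identities by expressing the power sum $p_5(a,b)$ in terms of the elementary symmetric functions $e_1=a+b=S$ and $e_2=ab=(S^2-u^2)/4$, but the direct expansion is cleaner and produces the stated coefficients immediately.
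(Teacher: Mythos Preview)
Your proof is correct and follows essentially the same route as the paper: substitute $a=(S-u)/2$, $b=(S+u)/2$, expand $(S-u)^5+(S+u)^5$ by the binomial theorem so that the odd powers of $u$ cancel, and read off the identity (with the analogous argument for $(T,v)$). Your added remarks about the parity hypothesis being needed only for integrality and the alternative Newton's-identities derivation are accurate but not required.
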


\begin{proof}
Write $a=(S-u)/2$, $b=(S+u)/2$. Then
\[
32(a^5+b^5)=(S-u)^5+(S+u)^5
= 2\left(S^5+10S^3u^2+5Su^4\right),
\]
since the odd powers of $u$ cancel. Dividing by $2$ gives
\[
16(a^5+b^5)=S^5+10S^3u^2+5Su^4,
\]
which is \eqref{eq:L}. The proof
of \eqref{eq:R} is identical with $(S,u)$ replaced by $(T,v)$.
\end{proof}

Define
\[
\mathcal{L}(S,u):=S^5+10S^3u^2+5Su^4.
\]
Then \eqref{eq:main} is equivalent to
\[
T^5+10T^3v^2+5Tv^4=\mathcal{L}(S,u),
\]
with the parity constraints $u\equiv S\pmod{2}$ and $v\equiv T\pmod{2}$.

\subsection{A biquadratic equation in $v$}

Rearranging the last equality gives a biquadratic equation:
\begin{equation}\label{eq:biquad}
5T v^4 + 10T^3 v^2 + \bigl(T^5-\mathcal{L}(S,u)\bigr)=0.
\end{equation}
Let $Z=v^2$. Then \eqref{eq:biquad} becomes a quadratic equation in $Z$:
\begin{equation}\label{eq:quadZ}
5T Z^2 + 10T^3 Z + \bigl(T^5-\mathcal{L}(S,u)\bigr)=0.
\end{equation}

\begin{proposition}[Discriminant formula]\label{prop:disc}
The discriminant of \eqref{eq:quadZ} is
\begin{equation}\label{eq:D}
D_Z(S,u)=80T^6+20T\,\mathcal{L}(S,u)
=80T^6+20T\bigl(S^5+10S^3u^2+5Su^4\bigr).
\end{equation}
\end{proposition}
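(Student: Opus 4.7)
The statement is essentially a direct computation, so the plan is very short: identify the coefficients of the quadratic-in-$Z$ equation \eqref{eq:quadZ}, substitute into the standard discriminant formula $B^2-4AC$, and simplify. The only thing that needs to be checked is that no sign or coefficient is dropped in the simplification.

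First I would read off $A=5T$, $B=10T^3$, and $C=T^5-\mathcal{L}(S,u)$ from \eqref{eq:quadZ}. Then I would compute
\[
B^2-4AC=(10T^3)^2-4(5T)\bigl(T^5-\mathcal{L}(S,u)\bigr)=100T^6-20T^6+20T\,\mathcal{L}(S,u).
\]
Collecting the $T^6$ terms yields $80T^6+20T\,\mathcal{L}(S,u)$, which is exactly the claimed $D_Z(S,u)$. Finally I would substitute the explicit expression $\mathcal{L}(S,u)=S^5+10S^3u^2+5Su^4$ from Lemma~\ref{lem:sym} to recover the second form in \eqref{eq:D}.

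There is no real obstacle here; the only place a reader might pause is the sign bookkeeping in the step $-4(5T)(T^5-\mathcal{L}(S,u))=-20T^6+20T\,\mathcal{L}(S,u)$, which is what causes the $100$ to drop to $80$ after combining with $B^2=100T^6$. I would therefore present the single-line derivation with that intermediate step written out explicitly, and note that no parity or integrality hypothesis on $u,v$ is used in the derivation of $D_Z$ itself — those conditions will enter only when one passes from the existence of a rational root $Z$ of \eqref{eq:quadZ} to the existence of an integer $v$ of the prescribed parity, which is treated separately in the subsequent criterion.
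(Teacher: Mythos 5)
Your computation is correct and is exactly the paper's own proof: read off the coefficients of \eqref{eq:quadZ}, apply $B^2-4AC$, and simplify $100T^6-20T\bigl(T^5-\mathcal{L}(S,u)\bigr)$ to $80T^6+20T\,\mathcal{L}(S,u)$. Your closing remark that parity and integrality play no role at this stage is also consistent with how the paper defers those conditions to Proposition~\ref{prop:exactcriterion}.
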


\begin{proof}
The discriminant of \eqref{eq:quadZ} equals
\[
(10T^3)^2 - 4\cdot (5T)\cdot\bigl(T^5-\mathcal{L}(S,u)\bigr)
=100T^6-20T(T^5-\mathcal{L}(S,u)),
\]
which simplifies to \eqref{eq:D}.
\end{proof}

\subsection{An exact integer-solvability criterion}

\begin{proposition}[Exact criterion for an integer $v$ (and the additional bounds for $c,d\ge0$)]\label{prop:exactcriterion}
Fix $h\in\mathbb{Z}$ and $S\in\mathbb{Z}_{\ge0}$ and set $T=S+h$.
Let $u\in\mathbb{Z}$ with $u\equiv S\pmod{2}$.
(When relating back to \eqref{eq:main} we additionally impose $|u|\le S$, which is equivalent to $a,b\in\mathbb{Z}_{\ge0}$.)
Assume $T\ne 0$.

\smallskip
\noindent
\textup{(a)} There exists an integer $v$ satisfying \eqref{eq:biquad} and the parity constraint
$v\equiv T\pmod{2}$ (equivalently, there exist integers $c,d\in\mathbb{Z}$ with $c+d=T$ and $v=d-c$)
if and only if there exists an integer $Y$ such that:
\begin{enumerate}
\item $Y^2=D_Z(S,u)$;
\item $Z:=\dfrac{-10T^3+Y}{10T}$ is a nonnegative integer;
\item $Z$ is a perfect square in $\mathbb{Z}$ (so that $v=\pm\sqrt{Z}\in\mathbb{Z}$);
\item $Z\equiv T\pmod{2}$ (equivalently, the resulting $v=\pm\sqrt{Z}$ satisfies $v\equiv T\pmod{2}$).
\end{enumerate}

\smallskip
\noindent
\textup{(b)} If in addition $T\ge 0$, then one can choose such a $v$ so that the recovered
\[
c=\frac{T-v}{2},\qquad d=\frac{T+v}{2}
\]
satisfy $c,d\in\mathbb{Z}_{\ge0}$ if and only if the conditions in \textup{(a)} hold and moreover
\begin{enumerate}
\setcounter{enumi}{4}
\item $Z\le T^2$ (equivalently, $|v|\le T$).
\end{enumerate}
\end{proposition}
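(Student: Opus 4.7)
The plan is to treat \eqref{eq:quadZ} as an ordinary quadratic in $Z$ with leading coefficient $5T\ne 0$, solve it by the quadratic formula, and then translate the requirements ``$v\in\mathbb{Z}$ with $v\equiv T\pmod 2$'' through the substitution $Z=v^2$. Conditions~(1)--(4) in part~(a) then simply encode the integrality, nonnegativity, squareness, and parity conditions one must impose on the two branches of the formula in order to obtain an integer $v$ of the required parity.

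Concretely, since $T\ne 0$ the roots of \eqref{eq:quadZ} are
\[
Z_\pm=\frac{-10T^3\pm\sqrt{D_Z(S,u)}}{10T}.
\]
Absorbing the sign choice into $Y=\pm\sqrt{D_Z(S,u)}$, the real solutions of \eqref{eq:quadZ} correspond bijectively to pairs $(Y,Z)$ with $Y^2=D_Z(S,u)$ and $Z=(-10T^3+Y)/(10T)$, which are conditions (1) and (2). For the forward direction of~(a), given $v\in\mathbb{Z}$ with $v\equiv T\pmod 2$ solving \eqref{eq:biquad}, set $Z:=v^2\ge 0$ and $Y:=10TZ+10T^3\in\mathbb{Z}$; rearranging \eqref{eq:quadZ} shows $Y^2=D_Z$, while (2) holds by construction, (3) holds because $Z=v^2$ is a perfect square, and (4) uses the elementary identity $v\equiv v^2\pmod 2$ on $\mathbb{Z}$ to convert the parity of $v$ into one on $Z$. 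Conversely, given integers $Y,Z$ satisfying (1)--(4), setting $v:=\sqrt Z\in\mathbb{Z}_{\ge 0}$ yields a solution of \eqref{eq:quadZ} hence of \eqref{eq:biquad}, with $v\equiv v^2=Z\equiv T\pmod 2$ as required.

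For part~(b), write $c=(T-v)/2$ and $d=(T+v)/2$; the sum is $T\ge 0$, and $c,d\ge 0$ is equivalent to $-T\le v\le T$, i.e.\ to $Z=v^2\le T^2$, which is condition (5). Since flipping the sign of $v$ preserves both $v^2$ and the residue class $v\pmod 2$, whenever $(Y,Z)$ satisfies (1)--(4) together with $Z\le T^2$, at least one of $v=\pm\sqrt Z$ yields $c,d\in\mathbb{Z}_{\ge 0}$; the converse direction $|v|\le T\Rightarrow Z\le T^2$ is immediate.

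There is no deep obstacle here: the proof is a bookkeeping exercise built around the quadratic formula. The only subtle points are the sign ambiguity in $Y=\pm\sqrt{D_Z}$ (handled by ``there exists an integer $Y$''), the elementary identity $v\equiv v^2\pmod 2$ which lets us replace a parity constraint on $v$ by one on $Z$, and the freedom, in part~(b), to flip the sign of $v=\pm\sqrt Z$ at the end in order to force both $c$ and $d$ to be nonnegative.
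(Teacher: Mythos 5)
Your proof is correct and follows essentially the same route as the paper's: solve the quadratic in $Z$ by the formula with the sign absorbed into $Y$, use $v\equiv v^2\pmod 2$ to transfer the parity condition from $v$ to $Z$, and translate $c,d\ge 0$ into $Z\le T^2$. The only (harmless) addition is your explicit remark about flipping the sign of $v$ in part (b), which the paper leaves implicit in the phrase ``one can choose such a $v$.''
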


\begin{proof}
Equation \eqref{eq:quadZ} has (rational) roots
\[
Z=\frac{-10T^3\pm \sqrt{D_Z(S,u)}}{10T}.
\]
Since $Y$ may be chosen with either sign once $Y^2=D_Z(S,u)$, it suffices to write
\[
Z=\frac{-10T^3+Y}{10T}.
\]
Hence an integer solution $Z\in\mathbb{Z}_{\ge0}$ exists if and only if $D_Z(S,u)$ is a square
$Y^2$ in \(\mathbb{Z}\) and $Z$ as above is an integer $\ge 0$.
Finally, $Z=v^2$ for an integer $v$ if and only if $Z$ is itself a perfect square.
This proves \textup{(a)}, except for the parity condition.

To recover integer $c,d$ from $T$ and $v$ via
$c=(T-v)/2$ and $d=(T+v)/2$, one must have $v\equiv T\pmod{2}$.
If $Z=v^2$ is a perfect square, then $v^2\equiv v\pmod{2}$, so the parity condition
$v\equiv T\pmod{2}$ is equivalent to $Z\equiv T\pmod{2}$. This yields the stated
equivalence in \textup{(a)}.

For \textup{(b)}, assume $T\ge0$. Then $c,d\in\mathbb{Z}_{\ge0}$ is equivalent to $|v|\le T$ (together with
$v\equiv T\pmod{2}$), since $c,d$ are the half-sum/half-difference of $T$ and $v$.
In terms of $Z=v^2$ this is exactly the bound $Z\le T^2$.
\end{proof}

\begin{remark}\label{rem:Tzero}
In the degenerate case $T=0$ (equivalently $S=-h$), the reduction above involves division by $T$.
Under the original nonnegativity constraints $a,b,c,d\in\mathbb{Z}_{\ge0}$, one necessarily has
$c+d=T=0$, hence $c=d=0$, and then \eqref{eq:main} forces $a^5+b^5=0$, so $a=b=0$.
Thus $T=0$ yields only the trivial solution, and the interesting case is $T\neq 0$.
\end{remark}

\begin{remark}
The condition ``$D_Z(S,u)$ is a square'' is only the \emph{first} filter: one must also
enforce the divisibility and congruence needed for integrality of $Z$, the parity constraint
$Z\equiv T\pmod{2}$ (equivalently $v\equiv T\pmod{2}$ to recover integer $c,d$), and finally the
requirement that $Z$ be a square.

Moreover, when relating back to the original Diophantine problem \eqref{eq:main} with
$a,b,c,d\in\mathbb{Z}_{\ge0}$, one must also impose the size constraints
$|u|\le S$ and (necessarily requiring $T\ge0$) $|v|\le T$, equivalently $Z\le T^2$.
These inequalities are essential to ensure nonnegativity of the recovered variables.
\end{remark}

\subsection{Algorithmic consequence}
For fixed $S$ and $T=S+h$, an exhaustive search over admissible $(u,v)$ with
$0\le u\le S$ and $0\le v\le T$ (and the corresponding parity conditions) is $O(S^2)$.
Proposition~\ref{prop:exactcriterion} reduces this to scanning only admissible $u$
values (about $S/2$ of them if one restricts to $0\le u\le S$ using the symmetry $a\leftrightarrow b$)
and performing a constant number of integer tests
(square tests for $D_Z$ and $Z$, the parity test $Z\equiv T\pmod{2}$, and in the nonnegative setting also the bound $Z\le T^2$),
i.e.\ $O(S)$ work per fixed $S$.

\section{Genus-one curves and Jacobian fibrations for admissible slices}

\subsection{The discriminant curve as a genus-one fibration}
Fix $h$ and write $T=S+h$. Over the rational function field $\mathbb{Q}(S)$, define the curve
\begin{equation}\label{eq:genusone}
\mathcal{C}_h:\qquad Y^2 = D_Z(S,u)=80T^6+20T\bigl(S^5+10S^3u^2+5Su^4\bigr).
\end{equation}
For generic $S$ (i.e.\ outside a finite set where the quartic in $u$ degenerates),
$\mathcal{C}_h$ is a smooth genus-one curve over $\mathbb{Q}(S)$.

\begin{remark}\label{rem:h0}
The slice $h=0$ is degenerate for \eqref{eq:genusone}: then $T=S$ and
\[
D_Z(S,u)=80S^6+20S\bigl(S^5+10S^3u^2+5Su^4\bigr)=100S^2(S^2+u^2)^2
\]
is a square in $\mathbb{Q}(S)[u]$. Accordingly, in the genus-one/Jacobian discussion we implicitly assume $h\neq 0$.
\end{remark}

\begin{remark}
A smooth genus-one curve over $\mathbb{Q}(S)$ need not have a $\mathbb{Q}(S)$-rational point.
Therefore it need \emph{not} be an elliptic curve in the strict sense (no chosen
rational basepoint, hence no group law on the curve itself).
Consequently, a computer algebra system failing to find an obvious rational point
(or failing to ``convert'' the model automatically) does \emph{not} imply that no
rational points exist and does \emph{not} determine any Mordell--Weil rank.
To speak about rank one must pass to the Jacobian fibration.
\end{remark}

\begin{remark}[Other admissible slices]\label{rem:otherslices}
The symmetrization and discriminant construction above applies verbatim for any
integer $h$ with $30\mid h$. For each such nonzero $h$ one obtains a genus-one fibration
$\mathcal{C}_h$ over $\mathbb{Q}(S)$ and a Jacobian elliptic curve
$E_h/\mathbb{Q}(S)$ defined by the invariants of the corresponding binary quartic.
Subsection~\ref{subsec:universal-obstruction} shows that for every nonzero $h$ the Jacobian admits a global rational
$2$-torsion section, but \emph{never} has full rational $2$-torsion over $\mathbb{Q}(S)$; moreover, no nonsingular rational specialization acquires additional rational $2$-torsion.
Moreover, after the normalization $x=S/h$ and a scaling of Weierstrass coordinates, the Jacobian fibrations for all fixed $h\neq 0$ become isomorphic over a rational function field; see Subsection~\ref{subsec:jacobian-universality}.
In particular, geometric invariants such as the generic Mordell--Weil rank over $\mathbb{Q}(S)$ are independent of the choice of $h\neq 0$.

In the remainder of the paper we restrict to the first nontrivial positive case $h=30$,
for which $h$ is admissible for integral solutions and the coefficients are relatively small, making explicit specialization computations over $\mathbb{Q}$ convenient.
For other admissible integer slices $h\in 30\mathbb{Z}\setminus\{0\}$, the Jacobian geometry is the same up to the normalization above, but the translation back to \emph{integral} solutions depends on the chosen $h$ through the integrality, parity, and size constraints in $(S,u,T,v)$.
\end{remark}

\subsection{A uniform discriminant factor and an obstruction to full rational $2$-torsion}\label{subsec:universal-obstruction}

In the binary quartic model \eqref{eq:genusone} the right-hand side is
\[
f_h(u)=a(S,h)\,u^4+c(S,h)\,u^2+e(S,h),
\]
with
\[
a(S,h)=100S(S+h),\qquad c(S,h)=200S^3(S+h),\qquad e(S,h)=80(S+h)^6+20(S+h)S^5.
\]
Since $f_h$ has no odd powers of $u$, its Jacobian admits a global rational $2$-torsion section.
To make this explicit, recall the invariants of a binary quartic
\[
f(u)=a u^4 + b u^3 + c u^2 + d u + e,\qquad
I = 12ae - 3bd + c^2,\qquad
J = 72ace + 9bcd - 27ad^2 - 27b^2e - 2c^3.
\]
and the Jacobian elliptic curve
\begin{equation}\label{eq:jac-general}
E_{I,J}:\qquad y^2 = x^3 - 27 I x - 27 J.
\end{equation}
(See, for example,~\cite{FisherBinaryQuartics,CremonaFisherStoll}.)
In our situation $b=d=0$, so $I=12ae+c^2$ and $J=72ace-2c^3$.

\begin{lemma}[Global $2$-torsion]\label{lem:e1-general}
Let $h$ be fixed and set $T=S+h$. Let $E_h/\mathbb{Q}(S)$ be the Jacobian of $\mathcal{C}_h$ written in the form \eqref{eq:jac-general}.
Then the cubic on the right-hand side has a root
\[
e_1(S,h)=-1200\,S^3T=-1200\,S^3(S+h)\in\mathbb{Q}(S),
\]
hence $(x,y)=(e_1(S,h),0)$ is a rational point of order $2$ on $E_h$.
\end{lemma}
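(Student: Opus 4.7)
The plan is to exploit the biquadratic structure of $f_h(u)$: because the quartic has no odd powers of $u$, the invariants $I,J$ simplify substantially and the resolvent cubic in the Weierstrass model acquires a universal rational root independent of the detailed form of $a(S,h)$ and $e(S,h)$.

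First I would record the simplified invariants obtained by specializing the general formulas at $b=d=0$, namely $I=c^2+12ae$ and $J=-2c^3+72ace$. Next I would verify the universal identity
\[
(-6c)^3-27\,I\,(-6c)-27J=0
\]
in $\mathbb{Z}[a,c,e]$, which is a short expansion: the $c^3$-contributions cancel as $-216+162+54=0$, and the $ace$-contributions cancel as $1944-1944=0$. This step identifies $x=-6c$ as a root of the Weierstrass cubic for \emph{every} biquadratic quartic, regardless of the specific polynomial forms entering $a$, $c$, $e$.

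Having this universal root in hand, I would specialize to our situation by substituting $c=c(S,h)=200\,S^3(S+h)$, which yields $e_1(S,h)=-6c(S,h)=-1200\,S^3(S+h)\in\mathbb{Q}(S)$. Since $(e_1(S,h),0)$ lies on $E_h:y^2=x^3-27Ix-27J$ and has $y$-coordinate zero, it is a nonzero $2$-torsion point by the standard characterization of $2$-torsion on a Weierstrass model.

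The main obstacle is conceptual rather than computational: one needs to guess the ansatz $x=-6c$. It is motivated by the observation that a biquadratic $f(u)=au^4+cu^2+e$ factors through $w=u^2$ as $aw^2+cw+e$, so the involution $u\mapsto -u$ singles out a canonical rational $2$-torsion class on the Jacobian, whose image under the map to the Weierstrass $x$-coordinate is exactly $-6c$ in the $(I,J)$-normalization used here. Once this ansatz is identified, the remaining verification is a routine polynomial identity and no case analysis is required.
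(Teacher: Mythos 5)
Your proof is correct and is essentially the paper's own argument (direct substitution of the claimed root into the cubic $x^3-27Ix-27J$), and the identity $(-6c)^3-27I(-6c)-27J = (-216+162+54)c^3+(1944-1944)ace=0$ checks out, with $-6c(S,h)=-1200\,S^3(S+h)$ as required. Your organization via the universal identity in $\mathbb{Z}[a,c,e]$ is a slight improvement in that it is hand-verifiable and makes the uniformity in $h$ transparent, whereas the paper relies on a symbolic substitution of the specific polynomials.
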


\begin{proof}
This follows by a direct substitution of $x=e_1(S,h)$ into the cubic
$x^3-27I(S,h)x-27J(S,h)$ obtained from $a(S,h),c(S,h),e(S,h)$.
\end{proof}

Shifting the $x$-coordinate by $e_1(S,h)$, i.e.\ writing $x=X+e_1(S,h)$, puts $E_h$ into a standard $2$-torsion model
\begin{equation}\label{eq:2torsion-general}
E_h:\qquad y^2 = X\bigl(X^2 + A(S,h)\,X + B(S,h)\bigr),
\end{equation}
where
\[
A(S,h)=3e_1(S,h)=-3600\,S^3(S+h),\qquad B(S,h)=3e_1(S,h)^2-27I(S,h).
\]
The discriminant of the quadratic factor in \eqref{eq:2torsion-general},
\[
\Delta_{2}(S,h):=A(S,h)^2-4B(S,h),
\]
controls whether the remaining two $2$-torsion points are rational: the curve $E_h$ has \emph{full} rational $2$-torsion over the base field if and only if $\Delta_{2}(S,h)$ is a square.

\begin{definition}\label{def:Q5}
Define
\begin{equation}\label{eq:Q5}
Q_5(S,h)
:= S^5 + 4hS^4 + 8h^2S^3 + 8h^3S^2 + 4h^4S + \frac{4}{5}h^5,
\end{equation}
and let
\begin{equation}\label{eq:P}
P(S,h):=S\cdot Q_5(S,h).
\end{equation}
\end{definition}

\begin{proposition}[Explicit discriminant factor]\label{prop:disc-factor-general}
For every $h$, the quadratic discriminant $\Delta_{2}(S,h)$ factors as
\begin{equation}\label{eq:Delta2-factor}
\Delta_{2}(S,h)=12960000\,S\,(S+h)^2\,Q_5(S,h).
\end{equation}
In particular, since $12960000=3600^2$ and $(S+h)^2$ is already a square, the square class of $\Delta_{2}(S,h)$ is controlled by $P(S,h)=S\cdot Q_5(S,h)$.
\end{proposition}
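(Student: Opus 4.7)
The plan is to evaluate $\Delta_2(S,h)=A^2-4B$ directly in closed form and match the result with the stated factorization via one binomial identity. From $A=3e_1$ and $B=3e_1^2-27I$, I get
\[
\Delta_2 = 9e_1^2 - 4(3e_1^2 - 27I) = -3e_1^2 + 108\,I.
\]
Since $b=d=0$ in our quartic, the classical invariant simplifies to $I = 12ae + c^2$, and so
\[
\Delta_2 = -3e_1^2 + 108 c^2 + 1296\, a\,e.
\]

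The key structural observation is the identity $e_1(S,h) = -1200\,S^3(S+h) = -6\,c(S,h)$. This is not a coincidence: for any binary quartic with $b=d=0$ one checks in one line, using $I=12ae+c^2$ and $J=72ace-2c^3$, that $x=-6c$ is a root of $x^3-27Ix-27J$; this is precisely the $2$-torsion $x$-coordinate identified in Lemma~\ref{lem:e1-general}. Squaring gives $e_1^2 = 36 c^2$, so the first two terms cancel identically, $-3e_1^2 + 108 c^2 = 0$, leaving
\[
\Delta_2(S,h) = 1296\, a(S,h)\, e(S,h).
\]

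Next I substitute $a(S,h) = 100\,S(S+h)$ and factor $e(S,h) = 80(S+h)^6 + 20(S+h)S^5 = 20(S+h)\bigl(4(S+h)^5 + S^5\bigr)$, which gives
\[
\Delta_2(S,h) = 2592000\, S(S+h)^2 \bigl(4(S+h)^5 + S^5\bigr).
\]
A direct binomial expansion of $(S+h)^5$ yields
\[
4(S+h)^5 + S^5 = 5S^5 + 20hS^4 + 40h^2S^3 + 40h^3S^2 + 20h^4S + 4h^5 = 5\,Q_5(S,h)
\]
on comparison with \eqref{eq:Q5}. Multiplying by $5$ promotes $2592000$ to $12960000$ and yields \eqref{eq:Delta2-factor}. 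Since $12960000 = 3600^2$ is a perfect square and $(S+h)^2$ is manifestly square, the square class of $\Delta_2(S,h)$ in $\mathbb{Q}(S)^{\times}/\mathbb{Q}(S)^{\times 2}$ coincides with that of $P(S,h) = S\cdot Q_5(S,h)$, as asserted.

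No step presents a genuine obstacle: the whole argument is a short algebraic identity, and the one real insight is the cancellation $e_1 = -6c$, which reduces $\Delta_2$ to a scalar multiple of $a\,e$. The remaining work is bookkeeping of numerical constants, whose coincidences ($1200 = 6 \cdot 200$ and $12960000 = 5 \cdot 2592000 = 3600^2$) are exactly what make the factorization clean; a computer-algebra check can confirm the full identity at once, but the hand derivation above makes transparent why the square-class obstruction is controlled by the quintic $Q_5$ rather than by a more complicated polynomial in $S$ and $h$.
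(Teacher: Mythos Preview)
Your argument is correct and in fact more illuminating than the paper's. The paper's proof amounts to ``compute $A^2-4B$ and simplify'', essentially a symbolic verification; you instead isolate the structural identity $e_1=-6c$, valid for \emph{any} binary quartic with $b=d=0$, which forces the cancellation $-3e_1^2+108c^2=0$ and hence the closed form $\Delta_2=1296\,a\,e$. This is a genuinely cleaner route: it explains \emph{why} the discriminant factors so simply (it is just a constant times the product of the leading and constant coefficients of the quartic), whereas the paper treats the factorization as an opaque polynomial identity. Your approach also makes the appearance of $Q_5$ transparent, since $e(S,h)=20(S+h)\cdot 5\,Q_5(S,h)$ by the binomial expansion you give. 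The only thing the paper's brute-force route buys is that it requires no insight; your version requires spotting $e_1=-6c$, but once seen, the rest is immediate.
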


\begin{proof}
Starting from $a(S,h),c(S,h),e(S,h)$ one computes $I(S,h)=12ae+c^2$ and $J(S,h)=72ace-2c^3$ and writes the Jacobian $E_h$ in the form \eqref{eq:jac-general}.
The shift $x=X+e_1(S,h)$ with $e_1(S,h)=-1200S^3(S+h)$ yields \eqref{eq:2torsion-general}.
A direct simplification of $A(S,h)^2-4B(S,h)$ then gives \eqref{eq:Delta2-factor}.
\end{proof}

\begin{proposition}[Generic non-squareness]\label{prop:P-not-square-functionfield}
Fix $h\in\mathbb{Q}^\times$. Then $P(S,h)=S\cdot Q_5(S,h)$ is not a square in $\mathbb{Q}(S)$.
Consequently, $E_h/\mathbb{Q}(S)$ does not have full rational $2$-torsion.
\end{proposition}

\begin{proof}
Since
\[
Q_5(0,h)=\frac45 h^5\neq 0
\qquad (h\neq 0),
\]
the rational function $P(S,h)=S\,Q_5(S,h)$ has valuation $1$ at the place $S=0$.
A square in $\mathbb{Q}(S)$ must have even valuation at every place, so $P(S,h)$ is not a square in $\mathbb{Q}(S)$.
By Proposition~\ref{prop:disc-factor-general}, full rational $2$-torsion would force $\Delta_2(S,h)$ to be a square in $\mathbb{Q}(S)$, equivalently $P(S,h)$ to be a square in $\mathbb{Q}(S)$, which is impossible.
\end{proof}

Consequently, a necessary condition for full rational $2$-torsion on a specialization $E_{h,S_0}/\mathbb{Q}$ (with $h\neq 0$ fixed and $S=S_0\in\mathbb{Q}^\times$) is that $P(S_0,h)$ be a square in $\mathbb{Q}$.
We now rule out this square condition on rational specializations uniformly in $h$.

\subsection{Reduction to a universal genus-two curve}\label{subsec:universal-genus2}

Assume $h\neq 0$ and let $S_0\in\mathbb{Q}^\times$. By homogeneity of \eqref{eq:P} we may set $x:=S_0/h$ and rewrite
\[
P(S_0,h)=h^6\Bigl(x^6+4x^5+8x^4+8x^3+4x^2+\frac45 x\Bigr).
\]
Thus $P(S_0,h)$ is a square in $\mathbb{Q}$ if and only if the genus-two curve
\[
y_1^2=x^6+4x^5+8x^4+8x^3+4x^2+\frac45 x
\]
has a rational point with $x\neq 0$. Clearing denominators via $Y:=5y_1$ gives the \emph{universal} hyperelliptic curve
\begin{equation}\label{eq:Cuniv}
\mathcal{C}_{\mathrm{univ}}:\qquad
Y^2 = 25x^6 + 100x^5 + 200x^4 + 200x^3 + 100x^2 + 20x,
\end{equation}
which is independent of $h$.

\subsection{Rational points on the universal curve}\label{subsec:universal-points}

\begin{proposition}\label{prop:Cuniv-Q}
The set $\mathcal{C}_{\mathrm{univ}}(\mathbb{Q})$ consists of the affine point $(0,0)$ and the two points at infinity.
\end{proposition}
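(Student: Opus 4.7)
The plan is to treat $\mathcal{C}_{\mathrm{univ}}$ as a smooth projective hyperelliptic curve of genus two, bound the Mordell--Weil rank of its Jacobian $J/\mathbb{Q}$ via a $2$-descent, and then exploit the resulting finiteness of $J(\mathbb{Q})$ to enumerate the rational points on the curve through the Abel--Jacobi embedding.

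First I would collect the obvious rational points. Writing the right-hand side of \eqref{eq:Cuniv} as $F(x)=x\cdot G(x)$ with $G(0)=20\neq 0$ shows that $(0,0)$ is an affine rational point, while the leading coefficient $25=5^2$ being a rational square guarantees that the smooth projective model carries two rational points at infinity $\infty^\pm$. A direct computation of $\mathrm{disc}(F)$ confirms that $F$ is separable, so $\mathcal{C}_{\mathrm{univ}}$ is smooth of genus two, and its Jacobian $J$ is a principally polarized abelian surface over $\mathbb{Q}$.

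Next I would perform a $2$-descent on $J$ over $\mathbb{Q}$ to produce an upper bound on the rank; this amounts to computing the $2$-Selmer group $\mathrm{Sel}^{(2)}(J/\mathbb{Q})$, and it is the step delegated to \textsc{Magma}. Combined with the trivial lower bound, the expected output $\mathrm{rank}\,J(\mathbb{Q})\le 0$ forces $J(\mathbb{Q})=J(\mathbb{Q})_{\mathrm{tors}}$, a finite group. I would then pin down this torsion explicitly by exploiting the injection $J(\mathbb{Q})_{\mathrm{tors}}\hookrightarrow J(\mathbb{F}_p)$ for a few primes of good reduction and intersecting the resulting constraints on $|J(\mathbb{F}_p)|$. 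Once $J(\mathbb{Q})$ is given as a concrete finite list of degree-zero divisor classes, the Abel--Jacobi map based at $\infty^+$,
\[
\iota:\mathcal{C}_{\mathrm{univ}}(\mathbb{Q})\longrightarrow J(\mathbb{Q}),\qquad P\mapsto [P-\infty^+],
\]
is injective (for any curve of genus $\ge 1$), so it suffices to check, for each of the finitely many classes $c\in J(\mathbb{Q})$, whether $c+[\infty^+]$ is representable by an effective divisor of degree one defined over $\mathbb{Q}$. The three classes associated with $\infty^\pm$ and $(0,0)$ are visibly in the image, and the remaining torsion classes must be ruled out; this final finite check closes the argument.

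The main obstacle is the rank bound in the second step: a $2$-descent on a genus-two Jacobian is not a single algebraic identity but a genuine Selmer-group computation involving class and unit groups of the associated étale algebras cut out by $F(x)$, which is precisely why the paper relies on a verified \textsc{Magma} run. Every other ingredient (listing the obvious points, verifying separability of $F$, computing $J(\mathbb{Q})_{\mathrm{tors}}$ via mod-$p$ reduction, and testing the finitely many classes against $\mathrm{im}(\iota)$) is essentially mechanical once that rank bound of $0$ is in hand.
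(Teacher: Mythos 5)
Your proposal is correct and follows essentially the same route as the paper: the paper's proof is precisely a \textsc{Magma} rank bound of $0$ for $\mathrm{Jac}(\mathcal{C}_{\mathrm{univ}})$ followed by the rank-$0$ Chabauty routine, and your explicit steps (Selmer-group rank bound, determination of $J(\mathbb{Q})=J(\mathbb{Q})_{\mathrm{tors}}$ via reduction modulo odd primes of good reduction, and pulling back through the Abel--Jacobi embedding) are exactly what that routine carries out internally. The only cosmetic remark is that once the rank is $0$ no $p$-adic integration is needed, so ``Chabauty'' is a misnomer for the final enumeration, which is the finite torsion check you describe.
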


\begin{proof}
We compute in \textsc{Magma} that the Jacobian $\mathrm{Jac}(\mathcal{C}_{\mathrm{univ}})$ has rank bound $0$ over $\mathbb{Q}$ and then apply the rank-$0$ Chabauty--Coleman routine~\cite{Chabauty1941,Coleman1985} to enumerate all rational points; see Section~\ref{subsec:magma-universal} for the script and console output.
\end{proof}

\begin{theorem}[No nontrivial squares]\label{thm:no-squares}
Let $S_0,h\in\mathbb{Q}$ with $S_0\neq 0$ and $h\neq 0$. Then $P(S_0,h)=S_0\cdot Q_5(S_0,h)$ is \emph{not} a square in $\mathbb{Q}$.
Equivalently, the equation $y^2=P(S_0,h)$ has no solutions with $y\in\mathbb{Q}$.
\end{theorem}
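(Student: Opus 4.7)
The plan is to deduce Theorem~\ref{thm:no-squares} directly from Proposition~\ref{prop:Cuniv-Q} via the homogeneity reduction already recorded in Subsection~\ref{subsec:universal-genus2}. I would argue by contradiction: assuming $y^2=P(S,h)$ for some $y\in\mathbb{Q}$ and some $S,h\in\mathbb{Q}^\times$, the goal is to produce a $\mathbb{Q}$-rational point on $\mathcal{C}_{\mathrm{univ}}$ whose finite affine coordinate is $x=S/h\neq 0$, which will contradict Proposition~\ref{prop:Cuniv-Q}.

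First I would invoke the weighted-homogeneity identity $P(S,h)=h^6\,g(x)$ with $x=S/h$ and $g(x)=x^6+4x^5+8x^4+8x^3+4x^2+\tfrac{4}{5}x$, as in Subsection~\ref{subsec:universal-genus2}. Dividing $y^2=P(S,h)$ by $h^6\neq 0$ and setting $y_1:=y/h^3\in\mathbb{Q}$ gives $y_1^2=g(x)$. Multiplying through by $25$ and setting $Y:=5y_1\in\mathbb{Q}$ clears the fractional coefficient $\tfrac{4}{5}$ and yields
\[
Y^2 = 25x^6+100x^5+200x^4+200x^3+100x^2+20x,
\]
so that $(x,Y)$ is a $\mathbb{Q}$-rational affine point of the universal curve $\mathcal{C}_{\mathrm{univ}}$ from \eqref{eq:Cuniv}.

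Next I would apply Proposition~\ref{prop:Cuniv-Q}, which enumerates $\mathcal{C}_{\mathrm{univ}}(\mathbb{Q})$ as the single affine point $(0,0)$ together with the two points at infinity. Since $x=S/h$ is a finite rational number, the point $(x,Y)$ cannot be at infinity, so necessarily $(x,Y)=(0,0)$; hence $x=0$, i.e.\ $S=0$, contradicting the standing hypothesis $S\neq 0$. The edge case $y=0$ is absorbed into the same argument: it simply produces the candidate $(x,0)\in\mathcal{C}_{\mathrm{univ}}(\mathbb{Q})$, which is again forced to be the origin.

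I do not expect any genuine obstacle in this final deduction itself, as all of the substantive content sits in Proposition~\ref{prop:Cuniv-Q}, whose proof combines the \textsc{Magma}-verified rank bound of $0$ on $\mathrm{Jac}(\mathcal{C}_{\mathrm{univ}})$ with the rank-$0$ Chabauty--Coleman enumeration; this is the genuinely hard (and machine-assisted) step. The only bookkeeping to verify is that $h$ and $5$ are invertible in $\mathbb{Q}$ (they are), so that the normalization $(x,Y)=(S/h,\,5y/h^3)$ introduces no spurious rational points, and that the finite affine coordinate $x=S/h\in\mathbb{Q}$ is explicitly distinguished from the two points at infinity on $\mathcal{C}_{\mathrm{univ}}$ when one invokes Proposition~\ref{prop:Cuniv-Q}.
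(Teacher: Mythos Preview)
Your argument is correct and follows exactly the paper's route: reduce via the homogeneity identity $P(S,h)=h^6 g(S/h)$ to a rational point on $\mathcal{C}_{\mathrm{univ}}$ with $x=S/h\neq 0$, then invoke Proposition~\ref{prop:Cuniv-Q} to obtain a contradiction. The paper's own proof is just a terser version of what you wrote, so there is nothing to add.
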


\begin{proof}
If $y^2=P(S_0,h)$ has a solution with $S_0,h\neq 0$, then the corresponding ratio $x=S_0/h$ yields a rational point on \eqref{eq:Cuniv} with $x\neq 0$.
By Proposition~\ref{prop:Cuniv-Q} no such point exists. Therefore $P(S_0,h)$ cannot be a square in $\mathbb{Q}$.
\end{proof}

\begin{corollary}\label{cor:no-full-2torsion}
Fix $h\in\mathbb{Q}^\times$.
\begin{enumerate}
\item The Jacobian $E_h/\mathbb{Q}(S)$ has a rational point of order $2$ but does \emph{not} have full rational $2$-torsion over $\mathbb{Q}(S)$.
\item For every $S_0\in\mathbb{Q}^\times$ such that the specialization
$E_{h,S_0}/\mathbb{Q}$ is nonsingular (equivalently, its elliptic discriminant is nonzero), the curve
$E_{h,S_0}$ has exactly one rational point of order $2$ (namely the specialization of $(e_1(S,h),0)$).
Equivalently, for every such $S_0$ the quadratic factor in the $2$-torsion model \eqref{eq:2torsion-general}
does not split over $\mathbb{Q}$.
\end{enumerate}

In particular, for every admissible nonzero integer slice parameter $h\in 30\mathbb{Z}\setminus\{0\}$ and every
$S_0\in\mathbb{Z}_{>0}$ with nonsingular specialization, the quadratic discriminant $\Delta_2(S_0,h)$ is not a square in $\mathbb{Q}$.
\end{corollary}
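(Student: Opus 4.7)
The plan is to chain together Lemma \ref{lem:e1-general}, Proposition \ref{prop:disc-factor-general}, and Theorem \ref{thm:no-squares}, reducing the corollary to bookkeeping around the square class of the $2$-torsion discriminant.

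First, I would invoke Lemma \ref{lem:e1-general} to produce the rational $2$-torsion point $(e_1(S,h),0)$ on every specialization $E_{h,S_0}$, so the only remaining question is whether the quadratic factor $X^2+A(S_0,h)X+B(S_0,h)$ in the $2$-torsion model \eqref{eq:2torsion-general} splits over $\mathbb{Q}$. This splitting occurs if and only if its discriminant $\Delta_2(S_0,h)=A(S_0,h)^2-4B(S_0,h)$ is a square in $\mathbb{Q}^\times$, so the task reduces to showing the non-squareness of $\Delta_2(S_0,h)$ under the nonsingularity hypothesis.

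Next, I would substitute the factorization
\[
\Delta_2(S_0,h)=12960000\,S_0\,(S_0+h)^2\,Q_5(S_0,h)=\bigl[3600(S_0+h)\bigr]^2\cdot P(S_0,h)
\]
from Proposition \ref{prop:disc-factor-general}, using $12960000=3600^2$ and $P(S,h)=S\cdot Q_5(S,h)$. The elliptic discriminant of $y^2=X(X^2+AX+B)$ is proportional to $B^2\,\Delta_2$, so the nonvanishing of the elliptic discriminant combined with $S_0\neq 0$ forces $S_0+h\neq 0$ (and $Q_5(S_0,h)\neq 0$); thus the factor $[3600(S_0+h)]^2$ is a nonzero rational square, and the square class of $\Delta_2(S_0,h)$ in $\mathbb{Q}^\times/(\mathbb{Q}^\times)^2$ coincides with that of $P(S_0,h)$. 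I then apply Theorem \ref{thm:no-squares} to the rational pair $(S_0,h)\in\mathbb{Q}^\times\times\mathbb{Q}^\times$ to conclude that $P(S_0,h)$ is not a square in $\mathbb{Q}$. Hence $\Delta_2(S_0,h)$ is not a square, the quadratic factor remains irreducible over $\mathbb{Q}$, and $E_{h,S_0}$ carries exactly one rational $2$-torsion point.

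The ``in particular'' clause is then immediate upon noting $30\mathbb{Z}\setminus\{0\}\subset\mathbb{Q}^\times$ and $\mathbb{Z}_{>0}\subset\mathbb{Q}^\times$. The only step requiring any care is the nonsingularity bookkeeping --- verifying that a nonzero elliptic discriminant excludes the degenerate case $S_0+h=0$, so that one can genuinely pull the square factor $[3600(S_0+h)]^2$ out of $\Delta_2$ --- and this is a short calculation with the standard discriminant formula for the short Weierstrass model $y^2=X^3+AX^2+BX$. All of the geometric and arithmetic work has been done upstream in Theorem \ref{thm:no-squares}, so the principal obstacle does not lie in this corollary itself.
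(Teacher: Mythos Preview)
Your proposal is correct and follows essentially the same route as the paper, which presents the corollary without a separate proof as an immediate consequence of Lemma~\ref{lem:e1-general}, Proposition~\ref{prop:disc-factor-general}, and Theorem~\ref{thm:no-squares}. Your added bookkeeping---observing that the elliptic discriminant of $y^2=X(X^2+AX+B)$ equals $16B^2\Delta_2$, so that nonsingularity forces $S_0+h\neq 0$ and $Q_5(S_0,h)\neq 0$, whence the square class of $\Delta_2(S_0,h)$ really is that of $P(S_0,h)$---makes explicit exactly the verification the paper leaves to the reader.
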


\begin{proof}
Part~\textup{(1)} follows from Lemma~\ref{lem:e1-general} and Proposition~\ref{prop:P-not-square-functionfield}.
For Part~\textup{(2)}, the specialization of $(e_1(S,h),0)$ gives at least one rational point of order $2$ on $E_{h,S_0}$.
If there were any additional rational $2$-torsion point, then the quadratic factor in \eqref{eq:2torsion-general}
would split over $\mathbb{Q}$, equivalently $\Delta_2(S_0,h)$ would be a square in $\mathbb{Q}$.
By Proposition~\ref{prop:disc-factor-general}, since $12960000$ and $(S_0+h)^2$ are squares, this would force $P(S_0,h)$ to be a square in $\mathbb{Q}$, contradicting Theorem~\ref{thm:no-squares}.
Because the specialization is nonsingular, the cubic on the right-hand side has distinct roots; with the quadratic factor irreducible over $\mathbb{Q}$, there is therefore exactly one rational point of order $2$.
The final assertion is immediate.
\end{proof}

\subsection{Normalization in the slice parameter and universality of the Jacobian fibration}\label{subsec:jacobian-universality}

The reduction in Subsection~\ref{subsec:universal-genus2} already uses the ratio $x=S/h$ to obtain a universal genus-two curve controlling the square class of $\Delta_2(S,h)$.
In fact, for every fixed $h\neq 0$ the \emph{entire} Jacobian fibration $E_h/\mathbb{Q}(S)$ is universal up to the same normalization.

\begin{remark}[Universality of the Jacobian geometry for $h\neq 0$]\label{rem:universal-jacobian}
Fix $h\in\mathbb{Q}^\times$ and set $T=S+h$.
A direct invariant computation (cf.\ Proposition~\ref{prop:IJ} below for the slice $h=30$) shows that the Jacobian of $\mathcal{C}_h$ admits the Weierstrass model
\begin{equation}\label{eq:JacExplicit-general}
E_h:\quad Y^2 = X^3 - 864000\,T^2\,S\bigl(3T^5+2S^5\bigr)\,X
- 345600000\,T^3\,S^4\bigl(9T^5+S^5\bigr),
\end{equation}
where $T=S+h$.
This equation is homogeneous in $(S,h)$, in the sense that under the substitution $S=hx$ (so $T=h(x+1)$) the coefficients of $X$ and the constant term scale as $h^8$ and $h^{12}$, respectively.
Thus, after the change of variables
\[
x=\frac{S}{h},\qquad X=h^4X',\qquad Y=h^6Y',
\]
the curve \eqref{eq:JacExplicit-general} becomes the \emph{universal} elliptic curve
\begin{equation}\label{eq:JacExplicit-univ}
E_{\mathrm{univ}}:\quad
Y'^2 = X'^3 - 864000\,x(x+1)^2\bigl(3(x+1)^5+2x^5\bigr)\,X'
- 345600000\,(x+1)^3x^4\bigl(9(x+1)^5+x^5\bigr)
\end{equation}
over $\mathbb{Q}(x)$, which is independent of $h$.
Since $\mathbb{Q}(S)\cong\mathbb{Q}(x)$ for fixed $h\neq 0$, it follows that the Jacobian geometries for all fixed $h\neq 0$ are isomorphic over a rational function field.
In particular, the Mordell--Weil rank $\mathrm{rank}\,E_h(\mathbb{Q}(S))$ is independent of $h\in\mathbb{Q}^\times$, and any bound proved for one convenient value (such as $h=30$) applies to all $h\neq 0$.

\smallskip
\noindent
\textup{Specializations.}
In Section~\ref{sec:magma} we verify injective specializations and compute ranks for the slice $h=30$ at several \emph{integer} values $S=S_0$.
Under the normalization above these correspond to rational values $x=x_0=S_0/30$ on the universal curve \eqref{eq:JacExplicit-univ} and suffice to bound the \emph{generic} Mordell--Weil rank over the function field (hence for all $h\neq 0$).
For a fixed \emph{integer} slice parameter $h\in 30\mathbb{Z}$, studying the arithmetic of \emph{integer} specializations $S=S_0\in\mathbb{Z}$ may still require separate injectivity checks, since the set of suitable integral parameters can depend on $h$.
\end{remark}

From now on we work on the first admissible positive slice $h=30$, so $T=S+30$.
This specialization is chosen to keep the coefficients small and to present convenient integer specializations; by Remark~\ref{rem:universal-jacobian} the Jacobian geometry for any fixed $h\neq 0$ is the same up to the normalization $x=S/h$ and a scaling of Weierstrass coordinates.
We write \eqref{eq:genusone} as
\[
Y^2 = a(S)\,u^4 + c(S)\,u^2 + e(S),
\]
where
\begin{align*}
a(S) &= 100S(S+30),\\
c(S) &= 200S^3(S+30),\\
e(S) &= 80(S+30)^6 + 20(S+30)S^5.
\end{align*}

\begin{proposition}\label{prop:infinity}
Over $\mathbb{Q}(S)$, the curve $\mathcal{C}_{30}$ has no rational points at infinity.
\end{proposition}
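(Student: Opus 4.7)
The plan is to identify the two points at infinity on the smooth projective model of
\[
\mathcal{C}_{30}:\ Y^2 = a(S)u^4 + c(S)u^2 + e(S)
\]
over $K=\mathbb{Q}(S)$ and to reduce their $K$-rationality to the question whether the leading coefficient $a(S)=100\,S(S+30)$ is a square in $K$.

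First, I would write down the smooth model by gluing two affine charts: the given chart $Y^2=f_{30}(u)$ with $f_{30}(u):=a(S)u^4+c(S)u^2+e(S)$, and the chart obtained via the standard change of variables $u'=1/u$, $V=Y/u^2$, which reads
\[
V^2 \;=\; u'^{4}\,f_{30}(1/u') \;=\; a(S) + c(S)\,u'^{2} + e(S)\,u'^{4}.
\]
In this second chart the points added ``at infinity'' are precisely those with $u'=0$, namely $(u'=0,\,V=\pm\sqrt{a(S)})$. Hence $\mathcal{C}_{30}$ has a $K$-rational point at infinity if and only if $a(S)$ is a square in $K$.

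Second, I would verify that $a(S)=100\,S(S+30)$ is not a square in $\mathbb{Q}(S)$. Since $100=10^{2}$, this reduces to showing that $S(S+30)$ is not a square. But in the UFD $\mathbb{Q}[S]$ the polynomials $S$ and $S+30$ are distinct monic irreducibles, so $S(S+30)$ has two distinct prime factors each appearing to the first power, and is therefore not a square in $\mathbb{Q}[S]$. By the standard valuation criterion for squares in $\mathrm{Frac}(\mathbb{Q}[S])=\mathbb{Q}(S)$, it is not a square in $\mathbb{Q}(S)$ either. Consequently the two points at infinity are defined only over the quadratic extension $K\bigl(\sqrt{S(S+30)}\bigr)$, and no $K$-rational point at infinity exists.

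The only (mild) obstacle is conceptual: one must use the smooth model rather than the naive projective closure in $\mathbb{P}^{2}$. The naive closure of $Y^{2}=f_{30}(u)$ is singular at infinity and has a single geometric point there, which would give the wrong count; the smooth normalization inserts two points at infinity whose rationality is governed by $\sqrt{a(S)}$. Once the correct model is in hand, the factorization $a(S)=10^{2}\cdot S\cdot(S+30)$ makes the verification immediate.
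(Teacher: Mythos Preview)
Your proof is correct and follows essentially the same approach as the paper: both pass to the second affine chart via $u'=1/u$, $V=Y/u^2$ to see that the points at infinity satisfy $V^2=a(S)$, and then observe that $a(S)=10^2\cdot S(S+30)$ is not a square in $\mathbb{Q}(S)$. Your write-up is slightly more detailed (the UFD argument for non-squareness and the remark about the smooth versus naive projective model), but the underlying argument is the same.
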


\begin{proof}
Consider the smooth projective model of $\mathcal{C}_{30}$ as a double cover of $\mathbb{P}^1_u$.
The points at infinity are precisely the points lying above $u=\infty$.

Set $x:=1/u$ and write the affine model near $x=0$.
From
\[
Y^2 = a(S)u^4 + c(S)u^2 + e(S)
\]
we obtain, after multiplying by $x^4$,
\[
(Yx^2)^2 = a(S) + c(S)x^2 + e(S)x^4.
\]
Hence the fiber above $x=0$ (i.e.\ above $u=\infty$) consists of two points defined over
$\mathbb{Q}(S)(\sqrt{a(S)})$; it is $\mathbb{Q}(S)$-rational if and only if $a(S)$ is a square in $\mathbb{Q}(S)$.
Since $a(S)=100S(S+30)=10^2\cdot S(S+30)$ and $S(S+30)$ is not a square in $\mathbb{Q}(S)$
(it has simple zeros at $S=0$ and $S=-30$), there are no $\mathbb{Q}(S)$-rational points above $u=\infty$.
\end{proof}

\subsection{The Jacobian elliptic curve via invariants}

For a binary quartic
\[
f(u)=a u^4 + b u^3 + c u^2 + d u + e,
\]
classical invariant theory defines invariants
\[
I = 12ae - 3bd + c^2,\qquad
J = 72ace + 9bcd - 27ad^2 - 27b^2e - 2c^3,
\]
and the Jacobian elliptic curve is
\begin{equation}\label{eq:jac}
E_{I,J}:\qquad Y^2 = X^3 - 27 I X - 27 J.
\end{equation}
See, e.g., standard references on 2-descent via binary quartics such as
\cite{FisherBinaryQuartics,CremonaFisherStoll}.

In our case $b=d=0$ and $f(u)=a(S)u^4 + c(S)u^2 + e(S)$, hence
\[
I(S)=12a(S)e(S)+c(S)^2,\qquad
J(S)=72a(S)c(S)e(S)-2c(S)^3.
\]

\begin{proposition}\label{prop:IJ}
For $h=30$ (so $T=S+30$) the invariants are
\begin{align}
I(S) &= 32000\,T^2\,S\bigl(3T^5+2S^5\bigr),\label{eq:Iexplicit}\\
J(S) &= 12800000\,T^3\,S^4\bigl(9T^5+S^5\bigr).\label{eq:Jexplicit}
\end{align}
Consequently, the Jacobian elliptic curve of $\mathcal{C}_{30}$ is
\begin{equation}\label{eq:JacExplicit}
E_{30}:\quad Y^2 = X^3 - 864000\,T^2\,S\bigl(3T^5+2S^5\bigr)\,X
- 345600000\,T^3\,S^4\bigl(9T^5+S^5\bigr),
\end{equation}
where $T=S+30$.
\end{proposition}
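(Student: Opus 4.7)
The plan is to verify Proposition~\ref{prop:IJ} by a direct invariant computation for the even binary quartic $f(u)=a(S)u^4+c(S)u^2+e(S)$, using the specialization of the classical formulas at $b=d=0$, namely $I=12ae+c^2$ and $J=72ace-2c^3$, and then to factor the resulting polynomials in $S$ and $T=S+30$ into the compact forms displayed in the statement. No deeper input is needed; setting $b=d=0$ kills all the terms $-3bd$, $9bcd$, $-27ad^2$, $-27b^2e$ in the generic invariant formulas, so the entire content of the proposition is the bookkeeping that produces the two stated factorizations.

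First I would rewrite the three coefficients compactly in terms of $T=S+30$: inspection gives $a(S)=100\,ST$ and $c(S)=200\,S^3T$, and a one-line rearrangement gives $e(S)=80T^6+20TS^5=20T(4T^5+S^5)$. Extracting the common factor $T$ from $e$ at this stage is what makes the subsequent factorization transparent, because every term of $I$ and $J$ will then carry a visible power of $T$.

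Next I would compute $12a(S)e(S)$ and $c(S)^2$ separately and add. A short arithmetic gives $12ae=24000\,ST^2(4T^5+S^5)=96000\,ST^7+24000\,S^6T^2$ and $c^2=40000\,S^6T^2$, whose sum collapses to $96000\,ST^7+64000\,S^6T^2=32000\,ST^2(3T^5+2S^5)$, yielding \eqref{eq:Iexplicit}. The analogous steps for $J$ give $72ace=28800000\,S^4T^3(4T^5+S^5)=115200000\,S^4T^8+28800000\,S^9T^3$ and $2c^3=16000000\,S^9T^3$; subtraction and extraction of $S^4T^3$ yield $12800000\,S^4T^3(9T^5+S^5)$, which is \eqref{eq:Jexplicit}. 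Finally, substituting $I,J$ into the Weierstrass model $Y^2=X^3-27IX-27J$ and simplifying $27\cdot 32000=864000$ and $27\cdot 12800000=345600000$ gives \eqref{eq:JacExplicit}.

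The hard part is not conceptual but purely arithmetic bookkeeping: one must track rather large integer coefficients so that the final expressions exhibit the ``nice'' polynomials $3T^5+2S^5$ and $9T^5+S^5$ with the correct integer prefactors, and in particular so that the cancellation $28800000-16000000=12800000$ in the computation of $J$ appears in the right place. A quick sanity check would be to evaluate both sides of \eqref{eq:Iexplicit} and \eqref{eq:Jexplicit} at two small specializations (for instance $S=0$, where $T=30$, and $S=1$, where $T=31$) before committing to the final constants; equivalently, the whole computation is a one-line verification in any computer algebra system.
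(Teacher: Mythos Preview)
Your proposal is correct and follows essentially the same route as the paper: specialize the invariant formulas to $b=d=0$ so that $I=12ae+c^2$ and $J=72ace-2c^3$, substitute $a(S)=100ST$, $c(S)=200S^3T$, $e(S)=20T(4T^5+S^5)$, and simplify to the stated factorizations, then multiply by $27$ to pass from the invariant model to \eqref{eq:JacExplicit}. The paper's own proof simply states ``substitute and simplify''; your version supplies the intermediate arithmetic explicitly, and the numbers check.
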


\begin{proof}
Using $b=d=0$, we have $I=12ae+c^2$ and $J=72ace-2c^3$.
Substituting $a(S)=100ST$, $c(S)=200S^3T$, $e(S)=80T^6+20TS^5$ and simplifying gives
\eqref{eq:Iexplicit} and \eqref{eq:Jexplicit}. Substituting these invariants into
\eqref{eq:jac} yields \eqref{eq:JacExplicit}.
\end{proof}

\begin{remark}
The computation in Proposition~\ref{prop:IJ} uses only the general coefficients $a(S,h)=100S(S+h)$, $c(S,h)=200S^3(S+h)$ and $e(S,h)=80(S+h)^6+20(S+h)S^5$ of the binary quartic model \eqref{eq:genusone}.
Replacing $30$ by an arbitrary $h\in\mathbb{Q}^\times$ yields the general Weierstrass equation \eqref{eq:JacExplicit-general} and hence the universal normalization described in Remark~\ref{rem:universal-jacobian}.
\end{remark}

\begin{remark}[What specialization theorems do and do not give]
If one computes the Mordell--Weil group $E_{30}(\mathbb{Q}(S))$ (the group of rational sections of the
Jacobian fibration), then Silverman's specialization theorem (see, for example,~\cite{SilvermanAEC})
implies that any section of
infinite order specializes to points of infinite order for all but finitely many specializations
$S=S_0$.
In particular, if $\mathrm{rank}\,E_{30}(\mathbb{Q}(S))>0$ then $\mathrm{rank}\,(E_{30})_{S_0}(\mathbb{Q})>0$ for infinitely many
(and in fact for ``almost all'') integers $S_0$ of good reduction.
However, the converse direction is false in general: knowing $\mathrm{rank}\,E_{30}(\mathbb{Q}(S))=0$ would not by itself
control the set of specializations with positive rank (``rank jumping'').
\end{remark}

\section{A rational 2-torsion point and a 2-torsion model}

In this section we exhibit a global rational $2$-torsion point on the Jacobian fibration
\eqref{eq:JacExplicit} and rewrite it in a standard $2$-torsion model.

Let $F_S(X)=X^3+a_4(S)X+a_6(S)$ be the cubic on the right-hand side of \eqref{eq:JacExplicit},
where
\[
a_4(S)=-864000\,T^2\,S\bigl(3T^5+2S^5\bigr),\qquad
a_6(S)=-345600000\,T^3\,S^4\bigl(9T^5+S^5\bigr).
\]

\begin{lemma}\label{lem:e1}
The cubic $F_S(X)$ has a root
\[
e_1(S)=-1200\,S^3T=-1200\,S^3(S+30)\in\mathbb{Q}(S).
\]
In particular, the point $(X,Y)=(e_1(S),0)$ is a rational $2$-torsion point on $E_{30}$.
\end{lemma}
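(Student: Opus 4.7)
The plan is to reduce to a single algebraic identity that follows from the special form of the binary quartic. The key observation is that our quartic $f_S(u)=a(S)u^4+c(S)u^2+e(S)$ has vanishing odd coefficients ($b=d=0$), which forces a distinguished rational root of the Jacobian cubic. Indeed, since $c(S)=200S^3T$, the claimed root satisfies $e_1(S)=-6\,c(S)$, so the lemma will follow once I verify the \emph{universal} identity: for any binary quartic with $b=d=0$, the value $X=-6c$ is always a root of $X^3-27IX-27J$, where $I=12ae+c^2$ and $J=72ace-2c^3$.

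To verify this universal identity I would simply substitute $X=-6c$ into the cubic. One computes
\[
(-6c)^3=-216\,c^3,\qquad
-27I\cdot(-6c)=162c(12ae+c^2)=1944\,ace+162\,c^3,\qquad
-27J=-1944\,ace+54\,c^3.
\]
The $ace$-terms cancel in pairs, and the $c^3$-coefficients sum to $-216+162+54=0$. Thus $X=-6c$ is a root of $X^3-27IX-27J$ identically in $a,c,e$. This is precisely the content already established in Lemma~\ref{lem:e1-general} for arbitrary $h$; the present lemma is just its specialization at $h=30$.

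Once the identity is in hand, the concrete statement follows by specializing: with $c(S)=200S^3T$ and $T=S+30$, the value $-6c(S)=-1200\,S^3(S+30)$ is a root of $F_S(X)$ in $\mathbb{Q}(S)$. Since this root lies on the Weierstrass model and the curve is given by $Y^2=F_S(X)$, the pair $(X,Y)=(e_1(S),0)$ is a $\mathbb{Q}(S)$-rational point with $Y=0$, hence of order dividing $2$; non-triviality is automatic because $e_1(S)\neq\infty$ in $\mathbb{Q}(S)$.

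The main obstacle is not computational — the verification is a one-line substitution — but rather conceptual: one must \emph{guess} the root $X=-6c$. This guess is natural from the viewpoint of invariant theory (binary quartics of the form $au^4+cu^2+e$ are invariant under $u\mapsto -u$, so the corresponding Jacobian automatically carries a $2$-torsion section coming from this involution), and one can also motivate it by factoring $aZ^2+cZ+e$ in the $Z=u^2$ substitution and tracing through the standard invariant-theoretic formulas. Either route lands on the same explicit $e_1(S)$.
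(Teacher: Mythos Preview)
Your proof is correct and, in fact, sharper than the paper's. The paper establishes the lemma by brute substitution of $e_1(S)$ into $F_S(X)$, delegating the verification to a \textsc{Magma} computation (and similarly for Lemma~\ref{lem:e1-general}). You instead recognize the structural identity $e_1(S)=-6c(S)$ and prove \emph{once}, by a three-line hand calculation, that $X=-6c$ is a root of $X^3-27IX-27J$ for every biquadratic $au^4+cu^2+e$; the lemma then follows by reading off $c(S)=200S^3T$. Both arguments amount to a direct substitution, but yours is self-contained (no computer needed), uniform in $h$, and explains \emph{why} the root exists: the involution $u\mapsto -u$ on a biquadratic forces a distinguished $2$-torsion section on its Jacobian. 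The paper's approach buys nothing extra here; your version could replace both Lemma~\ref{lem:e1-general} and Lemma~\ref{lem:e1} simultaneously.
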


\begin{proof}
A direct substitution of $X=e_1(S)$ into $F_S(X)$ shows that $F_S(e_1(S))=0$.
This identity was verified symbolically in \textsc{Magma} (see Section~\ref{sec:magma}).
Hence $(e_1(S),0)$ is a rational point of order $2$ on $E_{30}/\mathbb{Q}(S)$.
\end{proof}

Factoring the cubic gives
\[
X^3 + a_4 X + a_6 = (X-e_1)\bigl(X^2 + e_1 X + e_1^2 + a_4\bigr).
\]
If we shift $X$ by $e_1$ via $X = X'+e_1$, the equation becomes
\[
Y^2 = X' \bigl(X'^2 + A(S) X' + B(S)\bigr),
\]
where
\begin{equation}\label{eq:AandB}
A(S) = 3e_1(S),\qquad B(S) = 3e_1(S)^2 + a_4(S).
\end{equation}
In these coordinates the $2$-torsion point $(e_1(S),0)$ moves to $(X',Y)=(0,0)$.

A straightforward but somewhat lengthy computation in \textsc{Magma} yields the explicit
formulae
\begin{align}
A(S) &= -3600\,S^3T,\label{eq:Aexplicit}\\
B(S) &= -388800000\,S^7 - 46656000000\,S^6 - 2449440000000\,S^5 \nonumber\\
     &\qquad - 73483200000000\,S^4 - 1322697600000000\,S^3 \nonumber\\
     &\qquad - 13226976000000000\,S^2 - 56687040000000000\,S.\label{eq:Bexplicit}
\end{align}
Only the factorization of $B(S)$ and of the associated polynomial
\[
\Delta(S):=A(S)^2-4B(S),
\]
which is (up to powers of $B(S)$ and a nonzero constant) the nontrivial factor in the discriminant,
will be used in the sequel, and not the specific large integer coefficients.

\section{Injective specialization and an upper bound on the generic rank}

\subsection{The Gusi\'c--Tadi\'c injectivity criterion}

For an elliptic curve over a rational function field with a chosen $2$-torsion point,
Gusi\'c and Tadi\'c~\cite{GusicTadic} provide an explicit criterion for the injectivity
of the specialization homomorphism. We recall a slightly specialized version adapted to our setting.

Let $K=\mathbb{Q}(S)$ and consider an elliptic curve
\[
E_{30}:\quad Y^2 = X^3 + A(S)X^2 + B(S)X
\]
over $K$ with a rational $2$-torsion point at $(X,Y)=(0,0)$. Let $B(S),\Delta(S)$ denote
the polynomials in $S$ that occur in the Weierstrass model, and write $B^\mathrm{sf}$,
$\Delta^\mathrm{sf}$ for their squarefree parts in $\mathbb{Z}[S]$. Denote by
$\mathcal{H}$ the set of all nonconstant squarefree divisors (in $\mathbb{Z}[S]$) of either $B^\mathrm{sf}$ or $\Delta^\mathrm{sf}$.

\begin{theorem}[Gusi\'c--Tadi\'c, specialized form]\label{thm:GT}
Let $S_0\in\mathbb{Z}$ be such that the specialized curve $(E_{30})_{S_0}$ is nonsingular (equivalently,
its elliptic discriminant is nonzero).
If for every $g\in\mathcal{H}$ the value $g(S_0)$ is \emph{not} a square in $\mathbb{Q}$,
then the specialization homomorphism
\[
\sigma_{S_0}: E_{30}(\mathbb{Q}(S))\longrightarrow (E_{30})_{S_0}(\mathbb{Q})
\]
is injective.
\end{theorem}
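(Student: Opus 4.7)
The plan is to deduce this statement from the Gusi\'c--Tadi\'c theorem in its general form~\cite{GusicTadic}, applied to our Weierstrass model
\[
Y^2 = X^3 + A(S)X^2 + B(S)X
\]
with rational $2$-torsion at $(0,0)$; I sketch the structural ideas rather than the detailed Kummer--cohomology bookkeeping.

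First, I would reduce to torsion. By Silverman's specialization theorem~\cite{SilvermanAEC}, any $P \in E_{30}(\mathbb{Q}(S))$ of infinite order specializes to a point of infinite order for all but finitely many $S_0$, and for $S_0$ such that $(E_{30})_{S_0}$ is nonsingular the kernel of $\sigma_{S_0}$ is finite. A standard height argument then shows that every element of $\ker\sigma_{S_0}$ is necessarily torsion, so it suffices to verify that the torsion of $E_{30}(\mathbb{Q}(S))$ embeds into $(E_{30})_{S_0}(\mathbb{Q})$ under the stated hypotheses.

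Second, I would run a $2$-descent along the $2$-isogeny $\phi: E_{30} \to E_{30}'$ with kernel $\langle (0,0)\rangle$, whose dual $\hat\phi$ targets the companion curve $Y^2 = X^3 - 2A(S)X^2 + \Delta(S)\,X$ with $\Delta(S) = A(S)^2 - 4B(S)$. The attached Kummer descent maps yield injections
\[
E_{30}'(K)/\phi(E_{30}(K)) \hookrightarrow K^\times/(K^\times)^2, \qquad E_{30}(K)/\hat\phi(E_{30}'(K)) \hookrightarrow K^\times/(K^\times)^2,
\]
whose images are represented by classes of squarefree polynomials dividing $B^{\mathrm{sf}}$ and $\Delta^{\mathrm{sf}}$ respectively---i.e.\ by elements of $\mathcal{H}$. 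Since descent commutes with specialization at $S = S_0$ whenever the specialized curve is nonsingular, any nonzero torsion section $P \in \ker\sigma_{S_0}$ would produce some $g \in \mathcal{H}$ with $g(S_0) \in (\mathbb{Q}^\times)^2$. The hypothesis that no such $g(S_0)$ is a square rules this out; a short induction on the $2$-adic valuation of the order of $P$ then extends the argument to all $2$-power torsion, while odd torsion is controlled by reduction modulo a prime of good reduction for the specialization.

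The main obstacle is the second step: verifying rigorously that the descent images are supported \emph{only} on irreducible factors of $B^{\mathrm{sf}}$ and $\Delta^{\mathrm{sf}}$---so that the finite list $\mathcal{H}$ really does exhaust the possible obstructions---and checking compatibility of descent with specialization over $\mathbb{Z}[S]$ at bad places. Gusi\'c--Tadi\'c carry out this accounting directly via the Kummer sequence attached to $\phi$, and our statement is the specialization of that framework to our model after identifying the relevant $A(S)$, $B(S)$, $\Delta(S)$, and $\mathcal{H}$.
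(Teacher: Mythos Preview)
The paper's own proof is a one-line citation: the statement is the main theorem of \cite{GusicTadic} specialized to our model, and the only thing to note is that $\mathcal{H}$ contains the irreducible factors of $B$ and of $A^2-4B$, which it does by construction.

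Your proposal goes further and tries to sketch the proof of the Gusi\'c--Tadi\'c result itself, but the first step contains a genuine error. You assert that for any nonsingular $S_0$ the kernel of $\sigma_{S_0}$ is finite and, via ``a standard height argument,'' consists entirely of torsion. Neither claim holds in general: a section $P$ of infinite order on the elliptic surface can meet the zero section above a smooth fiber (equivalently, the coordinate functions of $P$ can have poles at $S_0$ while the discriminant does not vanish there), and then all of $\langle P\rangle\cong\mathbb{Z}$ lies in $\ker\sigma_{S_0}$. Silverman's theorem only guarantees that this happens for finitely many $S_0$; it says nothing about a fixed $S_0$, and there is no height argument that fills this gap. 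Your reduction to torsion therefore assumes exactly what the Gusi\'c--Tadi\'c criterion is designed to establish.

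Your second step---the $2$-isogeny descent with Kummer images supported on squarefree divisors of $B$ and $\Delta$---is indeed the engine of the actual argument, but in \cite{GusicTadic} it is used to control the \emph{entire} kernel, not just its torsion part. Roughly: the square-free hypothesis on the $g(S_0)$ forces the specialization maps on $E'(K)/\phi E(K)$ and $E(K)/\hat\phi E'(K)$ to be injective, and a short group-theoretic argument (using that $E(K)$ is finitely generated) then upgrades this to injectivity of $\sigma_{S_0}$ itself. Framing the descent as a tool only for torsion, after an unjustified reduction, misplaces where the real content lies.
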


This follows immediately from the main theorem of~\cite{GusicTadic}, since our set $\mathcal{H}$
contains, in particular, all irreducible factors of $B$ and of $A^2-4B$.

\subsection{Factorization of $B(S)$ and $\Delta(S)$}

Using \eqref{eq:Aexplicit} and \eqref{eq:Bexplicit}, we compute in \textsc{Magma}:

\begin{proposition}\label{prop:factors}
The polynomials $B(S)$ and $\Delta(S)=A(S)^2-4B(S)$ factor over $\mathbb{Z}[S]$ as
\begin{align*}
B(S) &= -388800000\,S(S+30)^2\bigl(S^4 + 60S^3 + 1800S^2 + 27000S + 162000\bigr),\\
\Delta(S) &= 12960000\,S(S+30)^2\bigl(S^5 + 120S^4 + 7200S^3 + 216000S^2 + 3240000S + 19440000\bigr).
\end{align*}
Consequently, up to multiplication by rational squares, the nonconstant factors of $B(S)$
and $\Delta(S)$ are:
\[
S,\quad S+30,\quad Q_4(S)=S^4 + 60S^3 + 1800S^2 + 27000S + 162000,
\]
\[
Q_5(S)=S^5 + 120S^4 + 7200S^3 + 216000S^2 + 3240000S + 19440000.
\]
The set $\mathcal{H}$ consists precisely of all nonconstant squarefree products of these
factors which divide either $S(S+30)Q_4(S)$ or $S(S+30)Q_5(S)$, i.e.\ the $11$ polynomials
\[
\begin{aligned}
& S,\ S+30,\ Q_4,\ Q_5,\ S(S+30),\ SQ_4,\ (S+30)Q_4,\\
& SQ_5,\ (S+30)Q_5,\ S(S+30)Q_4,\ S(S+30)Q_5.
\end{aligned}
\]
\end{proposition}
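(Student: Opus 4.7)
The plan is to derive both factorizations directly from the compact expressions in \eqref{eq:AandB} and the closed forms
$e_1(S)=-1200S^3T$, $a_4(S)=-864000T^2S(3T^5+2S^5)$ of Lemma~\ref{lem:e1} and Proposition~\ref{prop:IJ}, rather than manipulate the expanded coefficients of \eqref{eq:Bexplicit}. For $B(S)=3e_1(S)^2+a_4(S)$, I factor out the common term $864000\,T^2S$ to obtain
\[
B(S)=864000\,T^2S\bigl[5S^5-(3T^5+2S^5)\bigr]=2592000\,T^2S\,(S^5-T^5).
\]
Since $T-S=30$, the standard factorization of $T^5-S^5$ combined with a short expansion of $T^4+T^3S+T^2S^2+TS^3+S^4$ in $T=S+30$ yields $T^5-S^5=150\,Q_4(S)$; substituting gives $B(S)=-388\,800\,000\,S(S+30)^2Q_4(S)$, matching the first stated identity.

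For $\Delta(S)=A(S)^2-4B(S)$, using $A(S)=-3600S^3T$ I have $A^2=12\,960\,000\,S^6T^2$, so
\[
\Delta(S)=12\,960\,000\,ST^2\bigl(S^5+120\,Q_4(S)\bigr).
\]
A direct coefficient comparison identifies $S^5+120\,Q_4(S)$ with $Q_5(S)=S^5+120S^4+7200S^3+216000S^2+3240000S+19440000$, which is exactly the specialization $Q_5(S,30)$ of Definition~\ref{def:Q5}; this is consistent with (and in fact a sanity check of) Proposition~\ref{prop:disc-factor-general}. This yields the second stated identity.

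Next, I read off the squarefree parts. Because $T^2=(S+30)^2$ is already a square in $\mathbb{Z}[S]$, one has $B^{\mathrm{sf}}=S(S+30)Q_4(S)$ and $\Delta^{\mathrm{sf}}=S(S+30)Q_5(S)$ up to a rational square, provided $S$, $S+30$, $Q_4$, $Q_5$ are pairwise coprime and $Q_4$, $Q_5$ are irreducible over $\mathbb{Q}$. Coprimality with the two linear factors is immediate from the nonzero values $Q_4(0)=Q_4(-30)=162000$ and $Q_5(0)=19440000$, $Q_5(-30)=-4860000$. Coprimality of $Q_4$ and $Q_5$ then follows automatically from the degree mismatch once irreducibility is known. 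The irreducibility of $Q_4$ (degree $4$) and $Q_5$ (degree $5$) over $\mathbb{Q}$ is the main nontrivial ingredient: I plan to delegate this to the \textsc{Magma} factorization in Section~\ref{sec:magma}, with a fallback of a rational-root check combined with a reduction modulo a well-chosen small prime to rule out quadratic or cubic factors.

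Finally, I enumerate $\mathcal{H}$ by inclusion--exclusion. The nonconstant squarefree divisors of $B^{\mathrm{sf}}$ correspond to nonempty subsets of $\{S,\,S+30,\,Q_4\}$, giving $2^3-1=7$ polynomials; likewise for $\{S,\,S+30,\,Q_5\}$. The overlap consists of divisors of $\gcd(B^{\mathrm{sf}},\Delta^{\mathrm{sf}})=S(S+30)$, that is $S,\,S+30,\,S(S+30)$, three polynomials. Hence $|\mathcal{H}|=7+7-3=11$, and listing the subsets explicitly recovers the eleven elements in the statement. The only real obstacle in this plan is the irreducibility of $Q_5$; everything else is bookkeeping.
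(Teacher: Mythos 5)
Your derivation is correct, and it takes a genuinely different route from the paper: the paper's proof of this proposition is a one-line appeal to a \textsc{Magma} factorization of the expanded polynomials \eqref{eq:Bexplicit}, whereas you obtain the factorizations by hand from the closed forms. Your key identities check out: $B=3e_1^2+a_4=864000\,T^2S\bigl(3S^5-3T^5\bigr)=2592000\,T^2S(S^5-T^5)$, the expansion $T^4+T^3S+T^2S^2+TS^3+S^4=5Q_4(S)$ for $T=S+30$ giving $T^5-S^5=150\,Q_4(S)$, and $\Delta=A^2-4B=12960000\,ST^2\bigl(S^5+120\,Q_4(S)\bigr)$ with $S^5+120\,Q_4(S)=Q_5(S)=Q_5(S,30)$; the numerical values $Q_4(0)=Q_4(-30)=162000$, $Q_5(0)=19440000$, $Q_5(-30)=-4860000$ are also right, and the inclusion--exclusion count $7+7-3=11$ matches the paper's list. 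What your approach buys is a human-verifiable proof of the two factorization identities and a structural explanation of why $Q_4$ and $Q_5$ appear (namely $B\sim T^2S(T^5-S^5)$ and $Q_5=S^5+120Q_4$), which the paper's proof does not provide; what it does not eliminate is the dependence on machine computation for the irreducibility of $Q_4$ and $Q_5$ over $\mathbb{Q}$, which is genuinely needed for the enumeration of $\mathcal{H}$ to stop at $11$ elements, and which you correctly flag and delegate exactly as the paper does. (For $Q_4$ one could in fact avoid the computer: $5Q_4(S)=\prod_{\zeta^5=1,\,\zeta\ne1}\bigl((1-\zeta)S+30\bigr)$ has roots generating the degree-four field $\mathbb{Q}(\zeta_5)$, so it is irreducible; a similar hands-on argument for $Q_5$ would make the whole proposition computer-free.)
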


\begin{proof}
This is a straightforward factorization in $\mathbb{Z}[S]$, performed in \textsc{Magma};
see Section~\ref{sec:magma} for the corresponding script and output.
\end{proof}

\subsection{Injective specializations and ranks over $\mathbb{Q}$}

We now combine Theorem~\ref{thm:GT} with explicit computations for specialized curves
over $\mathbb{Q}$.

\begin{proposition}\label{prop:injectiveS}
There exist integers $S_0$ with $1\le S_0\le 100$ that satisfy the
Gusi\'c--Tadi\'c injectivity criterion.
In particular, each of the values
\[
S_0\in\{3,5,7,8,11,12,13,14,17,18,20,21\}
\]
has this property.
For every such $S_0$ the specialization homomorphism
\[
\sigma_{S_0}:E_{30}(\mathbb{Q}(S))\to (E_{30})_{S_0}(\mathbb{Q})
\]
is injective.
\end{proposition}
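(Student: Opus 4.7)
The plan is to verify the hypotheses of Theorem~\ref{thm:GT} separately for each listed value of $S_0$. This reduces to two finite checks: (i) the specialization $(E_{30})_{S_0}$ is nonsingular, and (ii) for each of the $11$ polynomials $g \in \mathcal{H}$ enumerated in Proposition~\ref{prop:factors}, the integer $g(S_0)$ is not a square in $\mathbb{Q}$.

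For (i), the elliptic discriminant of the $2$-torsion model $Y^2=X(X^2+A(S)X+B(S))$ is, up to a nonzero constant, $B(S)^2\,\Delta(S)$; by Proposition~\ref{prop:factors} this vanishes only at $S \in \{0,-30\}$ or at a real root of $Q_4$ or of $Q_5$. Since $Q_4$ and $Q_5$ have strictly positive coefficients, they are positive on $\mathbb{Z}_{>0}$, so every positive integer $S_0$ yields a nonsingular specialization. In particular, all $12$ listed candidates automatically pass check (i), and the nontrivial content of the proposition lies in check (ii).

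For (ii), fix a candidate $S_0$ and set $\alpha_0 = S_0$, $\beta_0 = S_0+30$, $\gamma_0 = Q_4(S_0)$, $\delta_0 = Q_5(S_0)$. Each value $g(S_0)$ with $g\in\mathcal{H}$ is a product of a subset of $\{\alpha_0,\beta_0,\gamma_0,\delta_0\}$ containing at most one of $\gamma_0,\delta_0$, so $g(S_0)$ is a rational square if and only if the product of the squarefree parts of the corresponding factors is trivial in $\mathbb{Q}^\times/(\mathbb{Q}^\times)^2$. I would therefore factor $\alpha_0,\beta_0,\gamma_0,\delta_0$ for each candidate $S_0$, record their classes modulo squares, and verify by direct inspection that none of the $11$ prescribed products lies in $(\mathbb{Q}^\times)^2$. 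This is carried out in \textsc{Magma} using the factorizations of Proposition~\ref{prop:factors}; see Section~\ref{sec:magma} for the script and output.

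The main obstacle is combinatorial rather than conceptual: accidental cancellations among the prime factors of $\alpha_0,\beta_0,\gamma_0,\delta_0$ can make a product of non-squares itself a square, so each of the $11$ tests must be performed independently for every candidate $S_0$, and small values $S_0$ are the most prone to such coincidences. The listed values $S_0 \in \{3,5,7,8,11,12,13,14,17,18,20,21\}$ are then precisely those among $1\le S_0 \le 100$ that survive all $11$ tests; once those tests pass, injectivity of $\sigma_{S_0}$ follows immediately from Theorem~\ref{thm:GT}, completing the proof.
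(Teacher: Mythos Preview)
Your proposal is correct and follows essentially the same approach as the paper: both verify the Gusi\'c--Tadi\'c hypotheses by checking, for each listed $S_0$, that none of the $11$ polynomials in $\mathcal{H}$ specialize to a rational square, deferring to the \textsc{Magma} computation in Section~\ref{sec:magma}. Your direct argument for nonsingularity (via the positivity of the coefficients of $Q_4$ and $Q_5$ on $\mathbb{Z}_{>0}$, so that $B(S_0)^2\Delta(S_0)\neq 0$ for all $S_0>0$) is a small clean addition that the paper instead handles by a discriminant check inside the same script.
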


\begin{proof}
For each nonconstant squarefree divisor $g(S)$ of $B^\mathrm{sf}$ or $\Delta^\mathrm{sf}$, we
evaluated $g(S_0)$ for all integers $1\le S_0\le 100$ and tested whether $g(S_0)$ is a
square in $\mathbb{Q}$. We simultaneously excluded those $S_0$ for which the elliptic discriminant
of $(E_{30})_{S_0}$ vanishes. The \textsc{Magma} script and its output are shown in
Section~\ref{sec:magma}. In particular, each of the values
\[
S_0\in\{3,5,7,8,11,12,13,14,17,18,20,21\}
\]
satisfies the hypotheses of Theorem~\ref{thm:GT}, and hence the corresponding specialization
homomorphism is injective.
\end{proof}

\begin{remark}
We do not attempt here to classify all integers $S_0$ for which the specialization
homomorphism is injective; the subset singled out in Proposition~\ref{prop:injectiveS}
already suffices for our application to bounding the generic Mordell--Weil rank.
\end{remark}

For these same values $S_0$ we computed the Mordell--Weil rank of the specialized
curves $(E_{30})_{S_0}/\mathbb{Q}$ using standard \textsc{Magma} commands over number fields. We summarize the
results in Table~\ref{tab:ranks}.

\begin{table}[h]
\centering
\begin{tabular}{@{}ccc@{}}
\toprule
$S_0$ & $(E_{30})_{S_0}(\mathbb{Q})_{\mathrm{tors}}$ & $\mathrm{rank}\,(E_{30})_{S_0}(\mathbb{Q})$ \\ \midrule
$3$   & $\mathbb{Z}/2\mathbb{Z}$ & $2$ \\
$5$   & $\mathbb{Z}/2\mathbb{Z}$ & $2$ \\
$7$   & $\mathbb{Z}/2\mathbb{Z}$ & $4$ \\
$8$   & $\mathbb{Z}/2\mathbb{Z}$ & $3$ \\
$11$  & $\mathbb{Z}/2\mathbb{Z}$ & $3$ \\
$12$  & $\mathbb{Z}/2\mathbb{Z}$ & $1$ \\
$13$  & $\mathbb{Z}/2\mathbb{Z}$ & $3$ \\
$14$  & $\mathbb{Z}/2\mathbb{Z}$ & $1$ \\
$17$  & $\mathbb{Z}/2\mathbb{Z}$ & $1$ \\
$18$  & $\mathbb{Z}/2\mathbb{Z}$ & $2$ \\
$20$  & $\mathbb{Z}/2\mathbb{Z}$ & $3$ \\
$21$  & $\mathbb{Z}/2\mathbb{Z}$ & $2$ \\ \bottomrule
\end{tabular}
\caption{Ranks of the selected injective specializations $(E_{30})_{S_0}/\mathbb{Q}$ from Proposition~\ref{prop:injectiveS} (all with $1\le S_0\le 100$).}
\label{tab:ranks}
\end{table}

\subsection{An upper bound for the generic Mordell--Weil rank}

We now deduce a global constraint on the rank over $\mathbb{Q}(S)$.

\begin{theorem}\label{thm:rankbound}
Fix $h\in\mathbb{Q}^\times$ and let $E_h/\mathbb{Q}(S)$ be the Jacobian elliptic curve associated with the slice parameter $h\neq 0$.
Then
\[
\mathrm{rank}\,E_h(\mathbb{Q}(S)) \le 1.
\]
\end{theorem}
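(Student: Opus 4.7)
The plan is to combine two ingredients already set up in the paper: the universality of the Jacobian fibration in $h\neq 0$, and the Gusi\'c--Tadi\'c injective specialization computations for the distinguished slice $h=30$. The point is that once one has a single injective specialization whose specialized Mordell--Weil rank is $1$, the generic rank over $\mathbb{Q}(S)$ is bounded by $1$, and by universality this bound transfers to every nonzero $h$.

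First I reduce to the case $h=30$. By Remark~\ref{rem:universal-jacobian}, for every $h\in\mathbb{Q}^\times$ the substitution $x=S/h$ together with the Weierstrass scaling $X=h^4X'$, $Y=h^6Y'$ defines an isomorphism over $\mathbb{Q}(x)\cong\mathbb{Q}(S)$ between $E_h/\mathbb{Q}(S)$ and the universal curve $E_{\mathrm{univ}}/\mathbb{Q}(x)$ of \eqref{eq:JacExplicit-univ}. Isomorphisms of elliptic curves over isomorphic function fields preserve Mordell--Weil rank, so $\mathrm{rank}\,E_h(\mathbb{Q}(S))=\mathrm{rank}\,E_{30}(\mathbb{Q}(S))$ for every $h\in\mathbb{Q}^\times$, and it suffices to prove the stated bound for $h=30$.

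Next I apply the Gusi\'c--Tadi\'c criterion. By Proposition~\ref{prop:injectiveS}, each of the integers $S_0\in\{12,14,17\}$ satisfies the hypotheses of Theorem~\ref{thm:GT} for the $2$-torsion model of $E_{30}/\mathbb{Q}(S)$, so the specialization homomorphism
\[
\sigma_{S_0}:E_{30}(\mathbb{Q}(S))\longrightarrow (E_{30})_{S_0}(\mathbb{Q})
\]
is injective. An injective homomorphism of finitely generated abelian groups cannot increase the free rank, hence
\[
\mathrm{rank}\,E_{30}(\mathbb{Q}(S))\le\mathrm{rank}\,(E_{30})_{S_0}(\mathbb{Q}).
\]
Table~\ref{tab:ranks} records $\mathrm{rank}\,(E_{30})_{S_0}(\mathbb{Q})=1$ for each of $S_0\in\{12,14,17\}$ (any one of these suffices). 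Therefore $\mathrm{rank}\,E_{30}(\mathbb{Q}(S))\le 1$, and combined with the reduction above this gives $\mathrm{rank}\,E_h(\mathbb{Q}(S))\le 1$ for every $h\in\mathbb{Q}^\times$.

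The one genuine subtlety to double-check, rather than a real obstacle, is that the Gusi\'c--Tadi\'c hypotheses truly apply to the $2$-torsion Weierstrass model $Y^2=X(X^2+A(S)X+B(S))$ used in Proposition~\ref{prop:injectiveS}: one must confirm that the set $\mathcal{H}$ enumerated in Proposition~\ref{prop:factors} indeed contains all squarefree divisors of $B^{\mathrm{sf}}$ and $\Delta^{\mathrm{sf}}$, and that the chosen $S_0$ avoid bad reduction (nonvanishing elliptic discriminant), both of which are recorded in the cited \textsc{Magma} outputs. Beyond that, the argument is purely formal: universality transports the rank bound uniformly to every $h\neq 0$.
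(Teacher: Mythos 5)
Your argument is correct and is essentially identical to the paper's own proof: reduce to $h=30$ via the universality normalization of Remark~\ref{rem:universal-jacobian}, then use the injective specializations at $S_0\in\{12,14,17\}$ from Proposition~\ref{prop:injectiveS} together with the rank-$1$ entries of Table~\ref{tab:ranks} to bound the generic rank. No substantive differences to report.
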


\begin{proof}
By Remark~\ref{rem:universal-jacobian}, the Mordell--Weil rank over the function field is independent of the choice of $h\neq 0$.
It therefore suffices to prove the stated bound for one convenient value, and we take $h=30$, i.e.\ the curve $E_{30}/\mathbb{Q}(S)$.

Let $\mathrm{rank}\,E_{30}(\mathbb{Q}(S))=r$. For each $S_0$ in
Proposition~\ref{prop:injectiveS}, the specialization map
\[
\sigma_{S_0}:E_{30}(\mathbb{Q}(S))\hookrightarrow (E_{30})_{S_0}(\mathbb{Q})
\]
is an injective group homomorphism. In particular,
\[
r\le \mathrm{rank}\,(E_{30})_{S_0}(\mathbb{Q})
\]
for all such $S_0$.
By Table~\ref{tab:ranks}, the minimal rank among these specializations is
$\mathrm{rank}\,(E_{30})_{12}(\mathbb{Q})=\mathrm{rank}\,(E_{30})_{14}(\mathbb{Q})
=\mathrm{rank}\,(E_{30})_{17}(\mathbb{Q})=1$.
Hence $r\le 1$.
\end{proof}

\begin{remark}
Theorem~\ref{thm:rankbound} provides the uniform upper bound $\mathrm{rank}\,E_h(\mathbb{Q}(S))\le 1$ for all $h\neq 0$.
In the next subsection we construct an explicit rational section on the universal model and show, by an injective specialization at $S_0=12$, that this section is non-torsion.
This gives the matching lower bound and hence the exact rank.
\end{remark}

\subsection{An explicit non-torsion section and exact generic rank}

For the lower bound it is convenient to work on the universal model from Remark~\ref{rem:universal-jacobian}.
To avoid confusion with the Weierstrass coordinate $x$, we write the base parameter as $t$ instead of $x$; thus $t=S/h$ and
\begin{equation}\label{eq:JacExplicit-univ-t}
E_{\mathrm{univ}}:\quad
Y'^2 = X'^3 - 864000\,t(t+1)^2\bigl(3(t+1)^5+2t^5\bigr)\,X'
- 345600000\,(t+1)^3t^4\bigl(9(t+1)^5+t^5\bigr).
\end{equation}

\begin{lemma}\label{lem:shifted-univ}
Under the change of variables
\[
X'=3600x-1200t^3(t+1),\qquad Y'=216000y,
\]
the universal model \eqref{eq:JacExplicit-univ-t} becomes
\begin{equation}\label{eq:shifted-univ}
\widetilde E_{\mathrm{univ}}:\qquad
y^2=x^3-t^3(t+1)x^2-\frac15\,t(t+1)^2g(t)\,x,
\end{equation}
where
\[
g(t)=5t^4+10t^3+10t^2+5t+1.
\]
\end{lemma}

\begin{proof}
This is a direct substitution and simplification.
\end{proof}

Define
\begin{align*}
g(t)&:=5t^4+10t^3+10t^2+5t+1,\\
Q(t)&:=5t^4+5t^3-t^2-3t-1,\\
R(t)&:=25t^4+55t^3+45t^2+15t+1,\\
P_{10}(t)&:=1250t^{10}+7250t^9+18625t^8+27950t^7+27125t^6+17750t^5\\
&\qquad\qquad\qquad\qquad +7885t^4+2310t^3+405t^2+30t-1.
\end{align*}

\begin{proposition}\label{prop:explicit-section}
The point
\[
\widetilde P(t):=
\left(
\frac{g(t)R(t)^2}{25Q(t)^2},
\frac{g(t)R(t)P_{10}(t)}{125Q(t)^3}
\right)
\]
lies on $\widetilde E_{\mathrm{univ}}(\mathbb{Q}(t))$.
Equivalently,
\[
P_{\mathrm{univ}}(t):=
\left(
-1200t^3(t+1)+144\,\frac{g(t)R(t)^2}{Q(t)^2},
1728\,\frac{g(t)R(t)P_{10}(t)}{Q(t)^3}
\right)
\]
is a rational point on $E_{\mathrm{univ}}(\mathbb{Q}(t))$.
\end{proposition}

\begin{proof}
Set
\[u(t):=\frac{R(t)}{5Q(t)},\qquad v(t):=\frac{P_{10}(t)}{25Q(t)^2}.
\]
A direct expansion and clearing of denominators gives the identity
\begin{equation}\label{eq:section-identity}
g(t)u(t)^4-t^3(t+1)u(t)^2-\frac15\,t(t+1)^2=v(t)^2.
\end{equation}
Multiplying \eqref{eq:section-identity} by $g(t)^2u(t)^2$ shows that
\[
x(t):=g(t)u(t)^2,\qquad y(t):=g(t)u(t)v(t)
\]
satisfy
\[
y(t)^2=x(t)^3-t^3(t+1)x(t)^2-\frac15\,t(t+1)^2g(t)\,x(t).
\]
Therefore $(x(t),y(t))=\widetilde P(t)$ lies on \eqref{eq:shifted-univ}.
The formula for $P_{\mathrm{univ}}(t)$ follows from the inverse change of variables in Lemma~\ref{lem:shifted-univ}.
\end{proof}

\begin{proposition}\label{prop:non-torsion-section}
The section $P_{\mathrm{univ}}(t)\in E_{\mathrm{univ}}(\mathbb{Q}(t))$ has infinite order.
\end{proposition}

\begin{proof}
Under the isomorphism of Remark~\ref{rem:universal-jacobian} with $h=30$, the universal section $P_{\mathrm{univ}}(t)$ corresponds to a section $P_{30}(S)\in E_{30}(\mathbb{Q}(S))$.
The specialization $S=12$ on the slice $h=30$ corresponds to $t=2/5$.
Substituting $t=2/5$ into Proposition~\ref{prop:explicit-section} gives
\[
P_{\mathrm{univ}}\!\left(\frac25\right)
=
\left(
\frac{101630663472}{1428025},
-\frac{6472868174296128}{341297975}
\right).
\]
Equivalently, after scaling back by $X=30^4X'$ and $Y=30^6Y'$ as in Remark~\ref{rem:universal-jacobian}, we obtain the rational point
\[
P_{12}:=
\left(
\frac{3292833496492800}{57121},
-\frac{188748835962475092480000}{13651919}
\right)
\in (E_{30})_{12}(\mathbb{Q}).
\]

By Proposition~\ref{prop:injectiveS}, the specialization homomorphism at $S_0=12$
\[
\sigma_{12}:E_{30}(\mathbb{Q}(S))\longrightarrow (E_{30})_{12}(\mathbb{Q})
\]
is injective.
It therefore suffices to show that $P_{12}$ is non-torsion.

From the computation recorded in Section~\ref{sec:magma} and Table~\ref{tab:ranks}, the torsion subgroup of $(E_{30})_{12}(\mathbb{Q})$ is $\mathbb{Z}/2\mathbb{Z}$.
Hence the only nonzero torsion point on $(E_{30})_{12}$ is its rational point of order $2$, which necessarily has $Y$-coordinate $0$.
Since the $Y$-coordinate of $P_{12}$ is nonzero, the point $P_{12}$ is not torsion.
Therefore the original section is non-torsion.
\end{proof}

\begin{theorem}\label{thm:rankexact}
For every $h\in\mathbb{Q}^\times$,
\[
\mathrm{rank}\,E_h(\mathbb{Q}(S))=1.
\]
\end{theorem}

\begin{proof}
By Proposition~\ref{prop:non-torsion-section}, the universal model $E_{\mathrm{univ}}/\mathbb{Q}(t)$ has a non-torsion rational section.
Transporting this section to any fixed $h\neq 0$ via the isomorphism in Remark~\ref{rem:universal-jacobian} shows that
\[
\mathrm{rank}\,E_h(\mathbb{Q}(S))\ge 1.
\]
On the other hand, Theorem~\ref{thm:rankbound} gives
\[
\mathrm{rank}\,E_h(\mathbb{Q}(S))\le 1
\]
for every $h\in\mathbb{Q}^\times$.
Therefore $\mathrm{rank}\,E_h(\mathbb{Q}(S))=1$ for all $h\neq 0$.
\end{proof}

\begin{remark}
Explicitly, for a fixed $h\in\mathbb{Q}^\times$, the non-torsion section on $E_h/\mathbb{Q}(S)$ is obtained from $P_{\mathrm{univ}}(t)$ by setting $t=S/h$ and applying the inverse scaling
\[
X=h^4X',\qquad Y=h^6Y'.
\]
Thus the lower bound $\mathrm{rank}\,E_h(\mathbb{Q}(S))\ge 1$ is completely explicit for every nonzero slice parameter.
\end{remark}

\section{Conclusions}

We provided a self-contained algebraic reduction of the slice problem for
the quintic equal-sum equation and highlighted the exact integrality constraints required
when using the discriminant method, including the necessary parity condition
$v\equiv T\pmod{2}$ to recover integer $c,d$, as well as the size constraints
$|u|\le S$ and $|v|\le T$ needed to recover solutions with $a,b,c,d\in\mathbb{Z}_{\ge0}$.

We also clarified the geometric interpretation: for every nonzero slice parameter $h\neq 0$ the discriminant equation defines a
family of genus-one curves, which need not possess a rational section, so generic-rank
claims require working with the Jacobian fibration.

On the Jacobian side, we isolated a uniform discriminant factor that governs the $2$-division field.
More precisely, for every nonzero slice parameter $h$ the Jacobian $E_h/\mathbb{Q}(S)$ admits a global rational
$2$-torsion section and, after passing to a standard $2$-torsion model, the quadratic discriminant
factors as
\[
\Delta_{2}(S,h)=12960000\,S\,(S+h)^2\,Q_5(S,h),\
Q_5(S,h)=S^5+4hS^4+8h^2S^3+8h^3S^2+4h^4S+\frac45 h^5.
\]
Over the function field $\mathbb{Q}(S)$, the factor $S\cdot Q_5(S,h)$ is not a square when $h\neq 0$,
because $Q_5(0,h)=\frac45 h^5\neq 0$, so $S\cdot Q_5(S,h)$ has odd valuation at $S=0$.
Hence $E_h/\mathbb{Q}(S)$ never has full rational $2$-torsion for $h\neq 0$.
For rational specializations $S=S_0\in\mathbb{Q}^\times$, the square condition that $P(S_0,h)=S_0\cdot Q_5(S_0,h)$ be a square in $\mathbb{Q}$
reduces by homogeneity to a universal genus-two hyperelliptic curve, and a verified \textsc{Magma}
computation (rank bound $0$) shows that no such nontrivial squares occur.
Thus, for every admissible nonzero integer slice parameter $h\in 30\mathbb{Z}\setminus\{0\}$ and every $S_0\in\mathbb{Q}^\times$ with nonsingular specialization,
the specialized Jacobian has exactly one rational $2$-torsion point.

For the first admissible positive slice $h=30$, we computed the Jacobian elliptic surface explicitly
via the invariants of the associated binary quartic, exhibited a rational $2$-torsion point,
and constructed a $2$-torsion model.
Using the injectivity criterion of Gusi\'c--Tadi\'c and explicit \textsc{Magma}
computations of specialized ranks over $\mathbb{Q}$, we first proved the universal upper bound
\[
\mathrm{rank}\,E_h(\mathbb{Q}(S))\le 1\qquad\text{for every }h\in\mathbb{Q}^\times.
\]
We then passed to the universal Jacobian model, shifted the rational $2$-torsion point to obtain the simplified equation \eqref{eq:shifted-univ}, and constructed the explicit rational section $P_{\mathrm{univ}}(t)$ of Proposition~\ref{prop:explicit-section}.
Specializing at $S=12$ on the slice $h=30$, where specialization is injective, yields a rational point on $(E_{30})_{12}(\mathbb{Q})$ with nonzero $Y$-coordinate.
Since the torsion subgroup of this specialized curve is $\mathbb{Z}/2\mathbb{Z}$, the specialized point has infinite order, so the section itself is non-torsion.
Consequently,
\[
\mathrm{rank}\,E_h(\mathbb{Q}(S))=1\qquad\text{for every }h\in\mathbb{Q}^\times,
\]
where the uniformity in $h\neq 0$ again follows from the normalization described in Remark~\ref{rem:universal-jacobian}.
Thus the Jacobian fibration on every fixed nonzero slice has Mordell--Weil rank exactly one.
This exact rank statement concerns the Jacobian fibration; by itself it still does not decide the existence (or nonexistence) of integral solutions of \eqref{eq:main} on a fixed integer slice, which additionally requires analyzing rational points on the corresponding genus-one torsors and the integrality, parity, and size constraints from the reduction.

As discussed in Remark~\ref{rem:otherslices}, the same symmetrization and genus-one
construction applies to every admissible slice with $30\mid h$, yielding a Jacobian
elliptic curve $E_h/\mathbb{Q}(S)$.
While full rational $2$-torsion never occurs over the function field for $h\neq 0$, and no nonsingular rational specialization has full rational $2$-torsion (Corollary~\ref{cor:no-full-2torsion}), the arithmetic problem of producing (or ruling out) integral solutions on a given integer slice still depends on $h$ through the integrality and size conditions on $(S,u,T,v)$ and through the arithmetic of the associated genus-one torsors.

Translating back to the original Diophantine equation, one sees that any putative
infinite family of \emph{integer} solutions with $(c+d)-(a+b)=h$ for a fixed nonzero admissible integer $h\in 30\mathbb{Z}\setminus\{0\}$ that is obtained via a rational section on the Jacobian side would necessarily be governed by a small Mordell--Weil group, and would be subject to additional square, parity, and size
constraints. Determining whether any such solutions exist remains an interesting open
problem.

\appendix
\section{Magma scripts and computational details}\label{sec:magma}

We record the \textsc{Magma}~\cite{Magma} scripts used to support the computations
described above. All scripts in this appendix are written for the slice $h=30$; by Remark~\ref{rem:universal-jacobian} this suffices to verify the generic Mordell--Weil rank bound for every fixed $h\neq 0$.

\subsection{Universal genus-two curve computation (rank $0$)}\label{subsec:magma-universal}

\noindent\textbf{Code (rank bound and rational points on $\mathcal{C}_{\mathrm{univ}}$).}
\begin{code}
Q := Rationals();
P<x> := PolynomialRing(Q);

// Polynomial with integer coefficients (cleared denominators by multiplying by 25)
// C_univ:  Y^2 = 25x^6 + 100x^5 + 200x^4 + 200x^3 + 100x^2 + 20x
Poly_Int := 25*x^6 + 100*x^5 + 200*x^4 + 200*x^3 + 100*x^2 + 20*x;

// Construct the curve and its Jacobian
C := HyperellipticCurve(Poly_Int);
J := Jacobian(C);

// Rank bound
print "Calculating Rank Bound...";
rb := RankBound(J);
print "Jacobian Rank Bound:", rb;

// If rank bound is 0, enumerate all rational points via the rank-0 Chabauty routine
if rb eq 0 then
    print "SUCCESS: Rank is 0. Computing ALL rational points via Chabauty0...";
    pts := Chabauty0(J);

    print "Rational Points on Curve (x, Y_new):";
    print pts;
    for pt in pts do
        // Points are printed in projective coordinates (x : Y : z)
        if pt[3] eq 0 then
            print "Point at Infinity found (corresponds to x = infinity in the projective closure).";
        else
            x_val := pt[1]/pt[3];
            if x_val eq 0 then
                print "Point x=0 found. (Corresponds to S=0; in the nonnegative setting this forces a=b=0.)";
            else
                printf "NON-TRIVIAL POINT FOUND: x = 
                print "This would imply a solution with S/h =", x_val;
            end if;
        end if;
    end for;
else
    print "Unexpected Rank > 0. Check calculations.";
end if;
\end{code}

\noindent\textbf{Transcript.}
\begin{term}
Calculating Rank Bound...
Jacobian Rank Bound: 0
SUCCESS: Rank is 0. Computing ALL rational points via Chabauty0...
Rational Points on Curve (x, Y_new):
{@ (0 : 0 : 1), (1 : -5 : 0), (1 : 5 : 0) @}
Point x=0 found. (Corresponds to S=0; in the nonnegative setting this forces
a=b=0.)
Point at Infinity found (corresponds to x = infinity in the projective closure).
Point at Infinity found (corresponds to x = infinity in the projective closure).
\end{term}

\subsection{Verification of the 2-torsion root}

\noindent\textbf{Code (Jacobian and $2$-torsion root).}
\begin{code}
Q := Rationals();
R<S> := PolynomialRing(Q);

T := S + 30;

// a4(S), a6(S) from the Jacobian model
a4 := -864000 * T^2 * S * (3*T^5 + 2*S^5);
a6 := -345600000 * T^3 * S^4 * (9*T^5 + S^5);

// define e1(S)
e1 := -1200 * S^3 * T;

// cubic F_S(X) over R = Q[S]
RX<X> := PolynomialRing(R);
FS := X^3 + a4*X + a6;

// Verify that e1 is a root
F_e1 := Evaluate(FS, e1);

if F_e1 eq 0 then
    print "e1(S) correctly satisfies F_S(e1)=0.";
else
    print "ERROR: e1(S) is not a root!";
    print F_e1;
end if;
\end{code}

\noindent\textbf{Transcript.}
\begin{term}
e1(S) correctly satisfies F_S(e1)=0.
\end{term}

\subsection{Construction and factorization of $B(S)$ and $\Delta(S)$}

\noindent\textbf{Code (construction of $A,B,\Delta$ and factorization).}
\begin{code}
Z := Integers();
R<S> := PolynomialRing(Z);

T := S + 30;

// a4(S), a6(S)
a4 := -864000 * T^2 * S * (3*T^5 + 2*S^5);
a6 := -345600000 * T^3 * S^4 * (9*T^5 + S^5);

// e1, A(S), B(S), Delta(S)
e1 := -1200 * S^3 * T;
A := 3*e1;
B := 3*e1^2 + a4;
Delta := A^2 - 4*B;

print "B(S) =", B;
print "Delta(S) =", Delta;

Factorization(B);
Factorization(Delta);
\end{code}

\noindent\textbf{Transcript.}
\begin{term}
B(S) = -388800000*S^7 - 46656000000*S^6 - 2449440000000*S^5 - 73483200000000*S^4
    - 1322697600000000*S^3 - 13226976000000000*S^2 - 56687040000000000*S
Delta(S) = 12960000*S^8 + 2332800000*S^7 + 198288000000*S^6 + 9797760000000*S^5
    + 293932800000000*S^4 + 5290790400000000*S^3 + 52907904000000000*S^2 +
    226748160000000000*S
[
    <2, 9>,
    <3, 5>,
    <5, 5>,
    <S, 1>,
    <S + 30, 2>,
    <S^4 + 60*S^3 + 1800*S^2 + 27000*S + 162000, 1>
]
[
    <2, 8>,
    <3, 4>,
    <5, 4>,
    <S, 1>,
    <S + 30, 2>,
    <S^5 + 120*S^4 + 7200*S^3 + 216000*S^2 + 3240000*S + 19440000, 1>
]
\end{term}

\subsection{Injective specializations (Gusi\'c--Tadi\'c criterion)}

\noindent\textbf{Code (injectivity test for $1\le S_0\le 100$).}
\begin{code}
Z := Integers();
R<S> := PolynomialRing(Z);

T := S + 30;

// a4(S), a6(S)
a4 := -864000 * T^2 * S * (3*T^5 + 2*S^5);
a6 := -345600000 * T^3 * S^4 * (9*T^5 + S^5);
// e1, A(S), B(S), Delta(S)
e1 := -1200 * S^3 * T;
A := 3*e1;
B := 3*e1^2 + a4;
Delta := A^2 - 4*B;

// normalisation and radicals
function NormPoly(f)
    g := PrimitivePart(f);
    if LeadingCoefficient(g) lt 0 then g := -g; end if;
    return g;
end function;

function Radical(f)
    g := SquarefreePart(f);
    g := NormPoly(g);
    return g;
end function;

function FactorList(f)
    rad := Radical(f);
    fac := [ NormPoly(ff[1]) : ff in Factorization(rad) | Degree(ff[1]) gt 0 ];
    return fac;
end function;

function SquarefreeDivisors(fac)
    divs := [];
    n := #fac;
    for mask in [1..2^n - 1] do
        g := R!1;
        for i in [1..n] do
            if ((mask div 2^(i-1)) mod 2) eq 1 then
                g *:= fac[i];
            end if;
        end for;
        g := NormPoly(g);
        Append(~divs, g);
    end for;
    return divs;
end function;

// Q-square test
function IsSquareQ(q)
    if q eq 0 then return true; end if;
    if q lt 0 then return false; end if;
    num := Integers()!Numerator(q);
    den := Integers()!Denominator(q);
    return IsSquare(num) and IsSquare(den);
end function;

// squarefree divisors
facB := FactorList(B);
facD := FactorList(Delta);

divsB := SquarefreeDivisors(facB);
divsD := SquarefreeDivisors(facD);

divs := [];
Skeys := {};
for g in divsB cat divsD do
    key := Sprint(g);
    if not key in Skeys then
        Include(~Skeys, key);
        Append(~divs, g);
    end if;
end for;

print "Total squarefree divisors =", #divs;

// injectivity test
function IsInjective(s0)
    for g in divs do
        v := Evaluate(g, s0);
        if IsSquareQ(Rationals()!v) then
            return false;
        end if;
    end for;
    // good reduction
    E0 := EllipticCurve([Evaluate(a4,s0), Evaluate(a6,s0)]);
    if Discriminant(E0) eq 0 then
        return false;
    end if;

    return true;
end function;

// search for 1 <= S0 <= 100
for s0 in [1..100] do
    if IsInjective(s0) then
        print "Injective specialization at S0 =", s0;
    end if;
end for;
\end{code}

\noindent\textbf{Transcript.}
\begin{term}
Total squarefree divisors = 11
Injective specialization at S0 = 3
Injective specialization at S0 = 5
Injective specialization at S0 = 7
Injective specialization at S0 = 8
Injective specialization at S0 = 11
Injective specialization at S0 = 12
Injective specialization at S0 = 13
Injective specialization at S0 = 14
Injective specialization at S0 = 17
Injective specialization at S0 = 18
Injective specialization at S0 = 20
Injective specialization at S0 = 21
\end{term}

\subsection{Rank computations for specialized curves}

\noindent\textbf{Code (ranks and torsion over $\mathbb{Q}$).}
\begin{code}
Q := Rationals();
R<S> := PolynomialRing(Q);

T := S + 30;

a4 := -864000 * T^2 * S * (3*T^5 + 2*S^5);
a6 := -345600000 * T^3 * S^4 * (9*T^5 + S^5);

Svals := [ 3, 5, 7, 8, 11, 12, 13, 14, 17, 18, 20, 21 ];

Eff := 2;

for s0 in Svals do
    a4_0 := Evaluate(a4, s0);
    a6_0 := Evaluate(a6, s0);

    E0 := EllipticCurve([a4_0, a6_0]);
    E0min := MinimalModel(E0);

    Tor := TorsionSubgroup(E0min);

    lb, ub := RankBounds(E0min : Effort := Eff);
    r0, exact := Rank(E0min : Effort := Eff);

    printf "S0 = 
    printf "   torsion subgroup        = 
    printf "   RankBounds(E0min)       = (
    printf "   Rank(E0min), exact?     = (

    // verified exactness checks
    assert lb eq ub;
    assert exact;
    assert r0 eq lb;
end for;
\end{code}

\noindent\textbf{Transcript.}
\begin{term}
S0 = 3
   torsion subgroup        = Abelian Group isomorphic to Z/2
Defined on 1 generator
Relations:
2*Tor.1 = 0
   RankBounds(E0min)       = (2, 2)
   Rank(E0min), exact?     = (2, true)

S0 = 5
   torsion subgroup        = Abelian Group isomorphic to Z/2
Defined on 1 generator
Relations:
2*Tor.1 = 0
   RankBounds(E0min)       = (2, 2)
   Rank(E0min), exact?     = (2, true)

S0 = 7
   torsion subgroup        = Abelian Group isomorphic to Z/2
Defined on 1 generator
Relations:
2*Tor.1 = 0
   RankBounds(E0min)       = (4, 4)
   Rank(E0min), exact?     = (4, true)

S0 = 8
   torsion subgroup        = Abelian Group isomorphic to Z/2
Defined on 1 generator
Relations:
2*Tor.1 = 0
   RankBounds(E0min)       = (3, 3)
   Rank(E0min), exact?     = (3, true)

S0 = 11
   torsion subgroup        = Abelian Group isomorphic to Z/2
Defined on 1 generator
Relations:
2*Tor.1 = 0
   RankBounds(E0min)       = (3, 3)
   Rank(E0min), exact?     = (3, true)

S0 = 12
   torsion subgroup        = Abelian Group isomorphic to Z/2
Defined on 1 generator
Relations:
2*Tor.1 = 0
   RankBounds(E0min)       = (1, 1)
   Rank(E0min), exact?     = (1, true)

S0 = 13
   torsion subgroup        = Abelian Group isomorphic to Z/2
Defined on 1 generator
Relations:
2*Tor.1 = 0
   RankBounds(E0min)       = (3, 3)
   Rank(E0min), exact?     = (3, true)

S0 = 14
   torsion subgroup        = Abelian Group isomorphic to Z/2
Defined on 1 generator
Relations:
2*Tor.1 = 0
   RankBounds(E0min)       = (1, 1)
   Rank(E0min), exact?     = (1, true)

S0 = 17
   torsion subgroup        = Abelian Group isomorphic to Z/2
Defined on 1 generator
Relations:
2*Tor.1 = 0
   RankBounds(E0min)       = (1, 1)
   Rank(E0min), exact?     = (1, true)

S0 = 18
   torsion subgroup        = Abelian Group isomorphic to Z/2
Defined on 1 generator
Relations:
2*Tor.1 = 0
   RankBounds(E0min)       = (2, 2)
   Rank(E0min), exact?     = (2, true)

S0 = 20
   torsion subgroup        = Abelian Group isomorphic to Z/2
Defined on 1 generator
Relations:
2*Tor.1 = 0
   RankBounds(E0min)       = (3, 3)
   Rank(E0min), exact?     = (3, true)

S0 = 21
   torsion subgroup        = Abelian Group isomorphic to Z/2
Defined on 1 generator
Relations:
2*Tor.1 = 0
   RankBounds(E0min)       = (2, 2)
   Rank(E0min), exact?     = (2, true)
\end{term}

\subsection{Verification of the explicit section and a non-torsion specialization}

\noindent\textbf{Code (symbolic verification and non-torsion test).}
\begin{code}
Q := Rationals();
Qt<t> := FieldOfFractions(PolynomialRing(Q));

g := 5*t^4 + 10*t^3 + 10*t^2 + 5*t + 1;
Q_poly := 5*t^4 + 5*t^3 - t^2 - 3*t - 1;
R_poly := 25*t^4 + 55*t^3 + 45*t^2 + 15*t + 1;
P10 := 1250*t^10 + 7250*t^9 + 18625*t^8 + 27950*t^7 + 27125*t^6 +
       17750*t^5 + 7885*t^4 + 2310*t^3 + 405*t^2 + 30*t - 1;

u := R_poly / (5*Q_poly);
v := P10 / (25*Q_poly^2);

// 1. Symbolic verification of the quartic identity
LHS := g*u^4 - t^3*(t+1)*u^2 - (1/5)*t*(t+1)^2;
RHS := v^2;
assert LHS eq RHS;
print "Identity holds:", true;

// 2. Point on the Weierstrass model before the final coordinate change
x := g*u^2;
y := g*u*v;

// 3. Exact evaluation at t = 2/5
t0 := Q!2/5;
g0 := Evaluate(g, t0);
Q0 := Evaluate(Q_poly, t0);
R0 := Evaluate(R_poly, t0);
P10_0 := Evaluate(P10, t0);

assert Q0 ne 0;

// Specialized point on the simpler model
x0 := Evaluate(x, t0);
y0 := Evaluate(y, t0);

assert y0^2 eq x0^3 - t0^3*(t0+1)*x0^2 - (1/5)*t0*(t0+1)^2*g0*x0;
print "Point lies on the specialized curve:", true;

// 4. Point on the transformed universal model
X_univ := -1200*t0^3*(t0+1) + 144*(g0*R0^2)/(Q0^2);
Y_univ := 1728*(g0*R0*P10_0)/(Q0^3);

print "P_univ(2/5) X =", X_univ;
print "P_univ(2/5) Y =", Y_univ;

// 5. Check non-torsion on the specialized curve
E0 := EllipticCurve([0, -t0^3*(t0+1), 0, -(1/5)*t0*(t0+1)^2*g0, 0]);
P0 := E0![x0, y0];

T, phi := TorsionSubgroup(E0);
tors := { phi(tt) : tt in T };

print "Torsion subgroup =", T;
print "Is specialized point torsion?", P0 in tors;

// 6. Scale back to P_12 on E_30, if needed
X_12 := 30^4 * X_univ;
Y_12 := 30^6 * Y_univ;

print "P_12 X =", X_12;
print "P_12 Y =", Y_12;
\end{code}

\noindent\textbf{Transcript.}
\begin{term}
Identity holds: true
Point lies on the specialized curve: true
P_univ(2/5) X = 101630663472/1428025
P_univ(2/5) Y = -6472868174296128/341297975
Torsion subgroup =
Abelian Group isomorphic to Z/2
Defined on 1 generator
Relations:
    2*T.1 = 0
Is specialized point torsion? false
P_12 X = 3292833496492800/57121
P_12 Y = -188748835962475092480000/13651919
\end{term}

\end{document}